\newtheorem{numberingthm}{Theorem}[section] 
\theoremstyle{definition}
\newtheorem{Def}[numberingthm]{Definition}
\theoremstyle{plain}
\newtheorem{Prop}[numberingthm]{Proposition}
\newtheorem{Theorem}[numberingthm]{Theorem}
\newtheorem{Cor}[numberingthm]{Corollary}
\newtheorem{Lemma}[numberingthm]{Lemma}
\newtheorem*{thmintroduction}{Theorem}
\newenvironment{Example}
{\pushQED{\qed}\example}
{\popQED\endexample}
\theoremstyle{remark}
\newtheorem{Remark}[numberingthm]{Remark}
\newcommand{\m}{\!\operatorname{-mod}}
\newcommand{\Ext}{\mathsf{Ext}}
\newcommand{\Tor}{\mathsf{Tor}}
\newcommand{\End}{\mathsf{End}}
\newcommand{\lerp}{{}^\perp}
\newcommand{\add}{\!\mathsf{add}}
\newcommand{\pdim}{\mathsf{pd}}
\newcommand{\Hom}{\mathsf{Hom}}
\newcommand{\injdim}{\mathsf{id}}
\newcommand{\gldim}{\mathsf{gldim}}
\newcommand{\findim}{\mathsf{findim}}
\newcommand{\op}{\mathsf{op}}
\newcommand{\hy}{\!\operatorname{-}\!}
\DeclareMathOperator*{\lsmod}{\!-\mathsf{mod}}
\DeclareMathOperator{\ldom}{\!-\mathsf{domdim}} 
\DeclareMathOperator{\lcodom}{\!-\mathsf{codomdim}}
\begin{document}

\baselineskip=14pt

\title{Relative Auslander--Gorenstein Pairs}

\author[T. Cruz]{Tiago Cruz}
\address[Tiago Cruz]{Max-Planck-Institut f\"ur Mathematik, Vivatsgasse 7, 53111 Bonn, Germany}
\email{tiago.cruz@mathematik.uni-stuttgart.de}
\curraddr{Institut f\"ur Algebra und Zahlentheorie, Universit\"at Stuttgart, Germany }

\author[C. Psaroudakis]{Chrysostomos Psaroudakis}
\address[Chrysostomos Psaroudakis]{Department of Mathematics, Aristotle University of Thessaloniki,
   54124, Thessaloniki, Greece}
\email{chpsaroud@math.auth.gr}

\subjclass[2020]{Primary: 16E10, 18G25 Secondary: 16G10, 16G20, 16S50}
\keywords{relative dominant dimension, relative Auslander pairs, Auslander algebras, global dimension, tilting-cotilting modules}

\begin{abstract}
In this paper, we introduce and study relative Auslander--Gorenstein pairs. This consists of a finite-dimensional Iwanaga-Gorenstein algebra together with a self-orthogonal module that provides a further homological feature of the algebra in terms of relative dominant dimension. These pairs will be called relative Auslander pairs whenever the algebra in question has finite global dimension. We characterize relative Auslander pairs by the existence and uniqueness of tilting-cotilting modules having higher values relative dominant and codominant dimension with respect to the self-orthogonal module. The same characterisation remains valid for relative Auslander--Gorenstein pairs if the self-orthogonal module has injective or projective dimension at most one. Our relative approach \mbox{generalises and unifies} the known results from the literature, for instance, the characterization of minimal Auslander--Gorenstein algebras. As an application of our methods, we prove that for any relative Auslander pair pieces of the module category of the endomorphism algebra of the self-orthogonal module can be identified with pieces of the module category of the endomorphism algebra of the unique tilting-cotilting module associated with the relative Auslander pair.  We provide explicit examples of relative Auslander pairs.
\end{abstract}

\maketitle



\section{Introduction and the main result}

One of the most important problems in representation theory is to determine whether a given finite-dimensional algebra has a finite number of indecomposable modules, that is, if it has finite representation type. Auslander's correspondence \cite{zbMATH03517355} connects this problem with homological algebra. Indeed, the endomorphism algebra of the direct sum of all non-isomorphic indecomposable modules over a finite-dimensional algebra of finite representation type has dominant dimension at least two and global dimension at most two. This endomorphism algebra is known as Auslander algebra.

In 70’s and 80’s, tilting theory emerged as a very useful technology in representation theory, and in other areas like algebraic geometry and algebraic topology. Tilting modules are useful objects to create derived equivalences. It was soon realised that they are a powerful tool  in the study of representation-finite type algebras and their indecomposable modules.  
 In \cite{zbMATH06685118}, a characterisation of Auslander algebras was found using tilting theory. More precisely, in \cite{zbMATH06685118}, Auslander algebras were characterised by the existence of tilting modules being generated and cogenerated by faithful projective-injective modules (see also \cite{zbMATH07081686}). Auslander algebras admit higher versions, the so-called $n$-Auslander algebras (see \cite{IyamaCorrespondence}) and these also can be characterised by the existence and uniqueness of tilting modules having large dominant and codominant dimension (see for example \cite{zbMATH07441895} and also \cite{zbMATH06902477}). In particular, the faithful projective-injective is a summand of such a tilting module. Similar results and generalisations of Auslander-type algebras have appeared since then in \mbox{\cite{AT, LiZhang, zbMATH07198564, Grevstad}.} 

The purpose of this paper is to extend these results on uniqueness and existence of tilting modules  which are determined by a module having faithful dimension (in the sense of \cite{BuanSolberg}) greater or equal to the global dimension of the algebra. Indeed, we call such a pair formed by the algebra and the module in the above conditions a relative Auslander pair (see Definition~\ref{defAusGorpair}).

In this paper, we suggest that the concept of relative dominant dimension with respect to a module studied and developed in \cite{Cr2} is the best suited framework to investigate these pairs and the tilting module associated with these covers.  Observe that when the module in question is projective-injective, we recover the classical dominant dimension and when is injective we recover the homological invariant 
used in \cite{LiZhang, AT}. A particular case of the main result of this paper is summarized below and is proved in Theorem~\ref{mainthm}.

\begin{thmintroduction}
Let $A$ be a finite-dimensional algebra and let $Q$ be a finitely generated, self-orthogonal $A$-module, with projective and injective dimensions at most $d$ for some natural number $d$. The following statements are equivalent$\colon$
\begin{enumerate}
\item $(A, Q)$ is a relative $2d$-Auslander pair, that is, $A$ has global dimension at most $2d$ and the  relative dominant dimension of $A$ with respect to $Q$ is greater or equal to $2d$.

\item There exists a unique basic $d$-tilting-cotilting module $T$ satisfying the conditions
\begin{enumerate}[(i)]
\item $Q\ldom_AT\geq d$, 

\item $Q\lcodom_AT\geq d$, and

\item  The  right $\Ext$-orthogonal modules of $T$ are exactly the modules having a finite resolution by modules in the additive closure of $T$.
\end{enumerate}
\end{enumerate}
\end{thmintroduction}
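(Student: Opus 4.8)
The plan is to prove the two implications separately, in both cases with a module $T$ obtained from a relative $\add Q$-coresolution of $A$.

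\emph{Proof of $(1)\Rightarrow(2)$.} Assume $\gldim A\le 2d$ and $Q\ldom_A A\ge 2d$, and fix a witnessing exact sequence $0\to A\to Q^{0}\to\cdots\to Q^{2d-1}$ with each $Q^{i}\in\add Q$; since $\pdim_A Q^{i}\le d$ and $\gldim A\le 2d$ it extends to an honest $\add Q$-coresolution, and self-orthogonality of $Q$ makes the whole thing $\Hom_A(-,Q)$-exact. Writing $D_j$ for the $j$-th cosyzygy of $A$ in this coresolution, I would set $T$ to be the basic module with $\add T=\add\bigl(Q^{0}\oplus\cdots\oplus Q^{d-1}\oplus D_d\bigr)$, so that $0\to A\to Q^{0}\to\cdots\to Q^{d-1}\to D_d\to 0$ and $0\to D_d\to Q^{d}\to\cdots\to Q^{2d-1}\to D_{2d}\to 0$ are exact. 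The first sequence is an $\add T$-coresolution of $A$ of length $d$; one then checks that $T$ is self-orthogonal and that $\pdim_A T\le d$ and $\injdim_A T\le d$, using self-orthogonality of $Q$, the $\Hom_A(-,Q)$-exactness (which propagates $\Ext$-vanishing against $Q$ to all the cosyzygies), and the bound $\gldim A\le 2d$ --- this is where the \emph{full} length $2d$ of the relative coresolution, not merely $d$, is needed. Thus $T$ is a basic $d$-tilting module with $\injdim_A T\le d$, hence a $d$-cotilting module as well. Condition (i) follows since the second sequence is a relative $\add Q$-coresolution of $D_d$ of length $d$ while the remaining summands of $T$ lie in $\add Q$; condition (ii) follows by running the same construction for the self-orthogonal $A^{\op}$-module $DQ$ (again of projective and injective dimension $\le d$) and transporting the conclusion through the left--right symmetry of relative dominant dimension of \cite{Cr2}. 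For (iii), the implication ``finite $\add T$-resolution $\Rightarrow$ right $\Ext$-orthogonal to $T$'' is automatic by dimension shifting from self-orthogonality of $T$; the converse holds because $\gldim A\le 2d$ makes every module have a finite projective resolution, each projective admits a finite $\add T$-coresolution (as $A$ does), and an Auslander--Buchweitz-type syzygy argument turns this into the required finite $\add T$-resolution of an arbitrary $X\in T^{\perp}$.

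\emph{Uniqueness.} Let $T'$ be another basic $d$-tilting-cotilting module with (i)--(iii). One first establishes $\Ext^{i}_A(T,T')=0=\Ext^{i}_A(T',T)$ for $i>0$ from (i) and (ii) applied to each of $T,T'$, the tilting/cotilting structure of the other, and the containment of $\add Q$ in both $\add T$ and $\add T'$. By (iii), $T^{\perp}$ is the class of modules admitting a finite $\add T$-resolution, which is closed under extensions and cokernels of monomorphisms; since $T'\in T^{\perp}$ and $T\in(T')^{\perp}$ this forces $T^{\perp}=(T')^{\perp}$, and as $\add T$ is recovered as the subcategory of $\Ext$-injective objects of $T^{\perp}$, we get $\add T=\add T'$.

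\emph{Proof of $(2)\Rightarrow(1)$.} The $d$-cotilting property of $T$ gives an exact sequence $0\to T_d\to\cdots\to T_0\to DA\to 0$ with $T_i\in\add T$, and since $\pdim_A T\le d$ this yields $\gldim A=\pdim_A(DA)\le 2d$. For the relative dominant dimension, take the $d$-tilting coresolution $0\to A\to T^{0}\to\cdots\to T^{d}\to 0$, replace each $T^{i}$ by a relative $\add Q$-coresolution of length $d$ (available by (i), since $Q\ldom_A X\ge d$ for every $X\in\add T$), and pass to the total complex; using (iii) and self-orthogonality to keep the splicing $\Hom_A(Q,-)$-exact produces a relative $\add Q$-coresolution of $A$ of length $2d$, i.e.\ $Q\ldom_A A\ge 2d$. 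Hence $(A,Q)$ is a relative $2d$-Auslander pair.

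\emph{Main obstacle.} The crux is the argument for $(1)\Rightarrow(2)$: showing that the single module $T$ cut from the middle of a length-$2d$ relative coresolution is simultaneously $d$-tilting and $d$-cotilting with relative dominant \emph{and} codominant dimension at least $d$. Here $\gldim A\le 2d$, the precise value $2d$ of the relative dominant dimension, and the left--right symmetry of \cite{Cr2} must be combined, and one must at every step argue inside the relative exact structure attached to $\add Q$ rather than treating $\add Q$-(co)resolutions as ordinary ones; the projective- and injective-dimension bounds on $T$, together with its self-orthogonality, are the most delicate points. Verifying (iii) and the mutual $\Ext$-orthogonality of $T$ and a competitor $T'$ in the uniqueness step is a secondary difficulty of the same flavour.
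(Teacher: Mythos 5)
Your $(1)\Rightarrow(2)$ argument --- cut $T$ from the middle of a length-$2d$ relative $\add Q$-coresolution of $A$, verify $T$ is $d$-tilting, run the dual construction over $A^{\op}$ and transport via $Q\ldom_A A=Q\lcodom_A DA$, then prove uniqueness by mutual $\Ext$-orthogonality --- is essentially the paper's route through Lemma~\ref{thmQdomdimTd}, Proposition~\ref{thmcotiltilbydimd} and Lemma~\ref{lemma1dot4}. One small caveat: take $T:=Q\oplus D_d$ rather than $Q^0\oplus\cdots\oplus Q^{d-1}\oplus D_d$; forcing $\add Q\subseteq\add T$ at the outset is what lets you identify your $T$ with the dual construction $C=DT_{\op}$ (the paper shows $\add_AT=\add_AC$) and thereby obtain $Q\lcodom_AT\ge d$.

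The genuine gap is in $(2)\Rightarrow(1)$. From the $d$-cotilting coresolution $0\to T_d\to\cdots\to T_0\to DA\to 0$ and $\pdim_AT\le d$ you correctly get $\pdim_A DA\le 2d$, but then write ``$\gldim A=\pdim_A(DA)\le 2d$.'' That equality is false in general: any non-semisimple self-injective algebra has $\pdim_A DA=0$ while $\gldim A=\infty$, so $\gldim A=\pdim_A DA$ only holds once $\gldim A<\infty$ is \emph{already known}. Conditions (i) and (ii) together with $T$ being $d$-tilting-cotilting only force $A$ to be Gorenstein with $\injdim_AA\le 2d$ --- this is exactly what Theorem~\ref{thm3dot3} shows in the $\pdim_AQ\le1$ case, and such algebras can have infinite global dimension. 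Condition (iii), $T^\perp=\widehat{\add_A T}$, is precisely what upgrades Gorenstein to finite global dimension; this is the equivalence (i)$\Leftrightarrow$(iv) of the paper's Theorem~\ref{thm4dot1}, which is then fed into Proposition~\ref{prop QdomdimA} to give both $Q\ldom_AA\ge2d$ and $\gldim A\le d+\injdim_AT\le2d$. In your sketch, (iii) enters only later, to keep the spliced $\add Q$-coresolution $\Hom_A(Q,-)$-exact; it is never used to establish finiteness of the global dimension. You need to first deduce $\gldim A<\infty$ from (iii) (via Theorem~\ref{thm4dot1}, or by arguing directly that (iii) plus $T$ tilting-cotilting forces every $A$-module to lie in $\widehat{\widecheck{\add_AT}}$ and hence to have finite projective dimension), and only then conclude $\gldim A=\pdim_A DA\le 2d$.
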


According to \cite{zbMATH02105773} and \cite{CE}, if $A$ is a quasi-hereditary algebra over an algebraically closed field with a simple preserving duality, then the tilting module associated with any relative Auslander pair $(A, Q)$ where $Q$ is a summand of the characteristic tilting module is exactly the characteristic tilting module. In Theorem \ref{mainthm}, we cover also the relative odd-Auslander pairs, but in such a case we no longer have a symmetry in (2)(i)-(ii). A feature of all relative Auslander pairs is that the tilting-cotilting $T$ associated with a relative Auslander pair always satisfies $Q\ldom_A T+Q\lcodom_A T\geq \gldim A$.

Our strategy for the main result makes use of the fact that tilting modules over finite-dimensional algebras having finite global dimension are also cotilting modules and the cotorsion pairs induced by being tilting and cotilting are exactly the same (see Theorem \ref{thm4dot1}).

If $Q$ has projective dimension at most one, then relative $2d$-Auslander pairs can also be characterised in terms of the $r$-tilting modules with $r\in \{1, \ldots,  d\}$ (see Corollary \ref{cor3dot4}).  Along the way, we introduce the concept of relative $n$-Auslander--Gorenstein pairs which generalise the concept of minimal Auslander--Gorenstein algebra. If $Q$ has projective dimension one, the relative $n$-Auslander--Gorenstein pairs are also characterised by the existence and uniqueness of a certain tilting-cotilting module in terms of relative dominant dimension (see Theorem \ref{thm3dot3}.)

 At the end of the paper, we exhibit a connection between relative Auslander pairs and cover theory. In fact, we show in Theorem \ref{thm7dot4} that any relative $2d$-Auslander pair $(A, Q)$ induces an $d$-$\mathcal{A}$ cover of $\End_A(Q)^{op}$ for a certain resolving subcategory $\mathcal{A}$ of a module category of a finite-dimensional algebra which is derived equivalent to $A$.  We conclude the paper by illustrating examples of relative Auslander pairs.
 Blocks of Schur algebras of finite type are Auslander algebras, and so in particular they form relative Auslander pairs with its faithful projective-injective module. The recent results of \cite{CE} show that the Schur algebras $S(2, d)$ together with the tensor power form relative Auslander pairs.

The paper is organised as follows: in Section~\ref{Preliminaries}, we discuss properties of modules $X$ with large relative (resp. codominant) dominant dimensions with respect to a module without self-extensions. In Section  \ref{sec3}, we establish that the faithful dimension of a self-orthogonal module $Q$ of projective dimension at most one characterises the existence of a certain $d$-tilting module having relative codominant dimension at least $d$ with respect to $Q$.
	In Section~\ref{sec4}, we introduce the concepts of relative Auslander--Gorenstein pairs and how they are characterised under certain conditions by the existence of a certain tilting-cotilting module.
	In Section~\ref{sec5}, we make precise how tilting modules over a finite-dimensional algebra $A$ can detect whether $A$ has finite global dimension.
	In Section~\ref{relativeGorAuspairs}, we establish that all relative Auslander pairs $(A, Q)$ are characterised by the existence of certain tilting-cotilting modules having in particular $Q$ as direct summand. In Section~\ref{sec7}, we prove that the endomorphism algebra of any module $Q$ that fits into an Auslander pair $(A, Q)$ can be resolved by an algebra derived invariant to $A$. Further, in Section~\ref{Examples}, we present examples of Auslander pairs.

\section{Preliminaries}\label{Preliminaries}

Throughout the paper, we denote by $A$ a finite-dimensional algebra over a field $k$ and $d$ a natural number. By $A\lsmod$ we denote the category of finitely generated left $A$-modules. Let  $Q$ and $X$ be in $A\lsmod$. We write $\lerp Q=\{Y\in \lsmod \ | \ \Ext_A^i(Y, Q)=0, \, \forall i>0 \}$ and ${D=\Hom_{k}(-,k)\colon A\m \to A^{op}\m}$ for the standard duality.  By $\add_A X$ (or just $\add X$) we mean the additive closure of $X$. Given $M\in A\m$, we will denote by $\pdim_{A} M$ (resp. $\injdim_A M$) the projective (resp. injective) dimension of $M$ over $A$. By $\gldim A$ we mean the global dimension of $A$. An algebra $A$ is called {\bf Iwanaga-Gorenstein} if $\injdim_A A$ and $\injdim A_A$ are finite. We start by recalling the concept of relative (co)dominant dimension presented in \citep{Cr2}.

\begin{Def}
Let $A$ be a finite-dimensional algebra and $Q, X$ in $A\lsmod$.
\begin{enumerate}
\item[(i)] We say that the {\bf relative dominant dimension} of $X$ with respect to $Q$ is greater or equal to $d$, denoted by $Q\ldom_AX\geq d$, if there is an exact sequence $0\to X\to Q_1\to Q_2\to \cdots \to Q_d$, with $Q_i\in \add{Q}$, such that the next sequence is exact
\[
 \ \ \ \ \xymatrix{
\Hom_A(Q_d, Q) \ar[r]^{}  & \cdots \ar[r]^{} & \Hom_A(Q_2, Q) \ar[r]^{} & \Hom_A(Q_1, Q) \ar[r]^{} & \Hom_A(X, Q) \ar[r]^{} &  0.
 } 
\]
Otherwise, we say that the {\bf relative dominant dimension} of $X$ with respect to $Q$ is zero.  The {\bf relative dominant dimension} of $X$ with respect to $Q$ is infinite if it is greater or equal to any natural number. 

\item[(ii)] We say that the {\bf relative codominant dimension} of $X$ with respect to $Q$ is greater or equal to $d$, denoted by $Q\lcodom_AX\geq d$, if there is an exact sequence $Q_d\to \cdots\to Q_2\to Q_1\to X \to 0$, with $Q_i\in \add{Q}$, such that the next sequence is exact
\[
\ \ \ \ \xymatrix{
\Hom_A(Q, Q_d) \ar[r]^{}  & \cdots \ar[r]^{} & \Hom_A(Q, Q_2) \ar[r]^{} & \Hom_A(Q, Q_1) \ar[r]^{} & \Hom_A(Q, X) \ar[r]^{} &  0.
 } 
\]
Otherwise, we say that the {\bf relative codominant dimension} of $X$ with respect to $Q$ is zero.
\end{enumerate}
\end{Def}

We recall now some facts from \cite{Cr2} about relative dominant dimension that are going to be used often in what follows. If $Q$ is a projective-injective module, then the relative (resp. co)dominant dimension of $M$ with respect to $Q$ is exactly the (resp. co)dominant dimension of $M$.

For modules belonging to $\lerp Q$, the computation of relative dominant dimension with respect to $Q$ can be simplified. In fact, given $M, Q\in \lerp Q$, $Q\ldom_A M\geq n$ if and only if there exists an exact sequence $0\rightarrow M\rightarrow Q_1\rightarrow \cdots \rightarrow Q_n$ with all $Q_i$ in $\add Q$ and the cokernel of $Q_{n-1}\rightarrow Q_n$ in $\lerp Q$ (see \citep[Proposition 3.1.11]{Cr2}). 

Actually, the relative dominant dimension of $M$ and the cokernel of $Q_{n-1}\rightarrow Q_n$ with respect to $Q$ are related as follows:

\begin{Cor}
\label{cor2dot2}
Assume that $Q\in \lerp Q$ and let $M\in A\m$. Suppose that there exists an exact sequence $0\rightarrow M\rightarrow Q_1\rightarrow \cdots\rightarrow Q_t\rightarrow X\rightarrow 0$ with $Q_i\in \add{Q}$ for some $t>0$. If $\Ext_A^i(X, Q)=0$ for $1\leq i\leq t$, then $Q\ldom_A M=t+Q\ldom_A X$.
\end{Cor}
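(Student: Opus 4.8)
The plan is to reduce to the case $t=1$ by induction on $t$, and then to argue directly with $\add Q$-resolutions that remain exact after applying $\Hom_A(-,Q)$. For the induction, set $Y=\operatorname{coker}(M\to Q_1)$, so that one has a short exact sequence $0\to M\to Q_1\to Y\to 0$ together with an exact sequence $0\to Y\to Q_2\to\cdots\to Q_t\to X\to 0$ of length $t-1$. Dimension shifting along the latter --- legitimate since $\Ext_A^{>0}(Q_i,Q)=0$ because $Q\in\lerp Q$ --- gives $\Ext_A^1(Y,Q)\cong\Ext_A^t(X,Q)=0$, while $\Ext_A^i(X,Q)$ still vanishes for $1\le i\le t-1$. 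The induction hypothesis yields $Q\ldom_A Y=(t-1)+Q\ldom_A X$, and the case $t=1$ applied to $0\to M\to Q_1\to Y\to 0$ then gives $Q\ldom_A M=1+Q\ldom_A Y=t+Q\ldom_A X$. So it remains to treat a single short exact sequence $0\to M\to Q_1\to X\to 0$ with $Q_1\in\add Q$ and $\Ext_A^1(X,Q)=0$.

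For such a sequence, applying $\Hom_A(-,Q)$ produces a short exact sequence $0\to\Hom_A(X,Q)\to\Hom_A(Q_1,Q)\to\Hom_A(M,Q)\to 0$; in particular $Q\ldom_A M\ge 1$, the monomorphism $M\to Q_1$ is a left $\add Q$-approximation of $M$ (since $\Hom_A(Q_1,Q)\to\Hom_A(M,Q)$ is onto), and $\Hom_A(X,Q)$ is precisely the kernel of $\Hom_A(Q_1,Q)\to\Hom_A(M,Q)$. To establish $Q\ldom_A M\ge 1+Q\ldom_A X$, I would pick an exact sequence $0\to X\to P_1\to\cdots\to P_s$ realizing $Q\ldom_A X\ge s$ and splice it onto $0\to M\to Q_1$ along the composite $Q_1\twoheadrightarrow X\hookrightarrow P_1$; the resulting exact sequence $0\to M\to Q_1\to P_1\to\cdots\to P_s$ has all its terms in $\add Q$, and a short diagram chase --- using that $\Hom_A(X,Q)\hookrightarrow\Hom_A(Q_1,Q)$ is injective and that $\Hom_A(P_1,Q)\twoheadrightarrow\Hom_A(X,Q)$ is surjective (part of the exactness of the $\Hom_A(-,Q)$-complex of the chosen resolution of $X$) --- shows that its induced $\Hom_A(-,Q)$-complex is exact. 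Hence $Q\ldom_A M\ge 1+s$, and letting $s=Q\ldom_A X$ (or run it to infinity) gives the bound.

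For the reverse inequality, suppose $Q\ldom_A M\ge n$ and fix an exact sequence $0\to M\to P_1\to\cdots\to P_n$ with all $P_i\in\add Q$ whose $\Hom_A(-,Q)$-complex is exact. Its first map is again a monic left $\add Q$-approximation of $M$, so deleting the leftmost term produces an exact sequence $0\to C\to P_2\to\cdots\to P_n$ with $C=\operatorname{coker}(M\to P_1)$, whose $\Hom_A(-,Q)$-complex is still exact (by an analogous chase); thus $Q\ldom_A C\ge n-1$. Now $X=\operatorname{coker}(M\to Q_1)$ and $C$ are both cokernels of monic left $\add Q$-approximations of $M$, so each of them is isomorphic to $X_0=\operatorname{coker}(M\to Q_0)$ up to a direct summand lying in $\add Q$, where $M\to Q_0$ is the minimal left $\add Q$-approximation of $M$ (which exists as $A$ is finite-dimensional). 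Since modules in $\add Q$ have infinite relative dominant dimension with respect to $Q$, and the relative dominant dimension with respect to a self-orthogonal module is unaffected by adding or removing direct summands in $\add Q$ (a basic property of the invariant, see \cite{Cr2}), it follows that $Q\ldom_A X=Q\ldom_A X_0=Q\ldom_A C\ge n-1$. Taking $n=Q\ldom_A M$ gives $Q\ldom_A M\le 1+Q\ldom_A X$, which with the previous paragraph yields the equality; the cases where one of the dimensions is infinite follow from the same arguments.

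The point that needs care is that $M$ itself need not lie in $\lerp Q$ --- only the vanishing of $\Ext_A^i(-,Q)$ in degrees $\le t$ transfers along the given sequence --- so one cannot directly invoke the resolution criterion for relative dominant dimension available for modules in $\lerp Q$. The real content of the case $t=1$ is thus the comparison of two $\Hom_A(-,Q)$-exact $\add Q$-resolutions of $M$, which is settled by the essential uniqueness of left $\add Q$-approximations; the remaining steps are diagram chases or applications of already-recorded properties of the invariant.
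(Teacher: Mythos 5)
The paper's ``proof'' of Corollary~\ref{cor2dot2} is a bare citation to \citep[Corollary 3.1.12]{Cr2}, so there is no internal argument to compare against; what I can do is assess your argument on its own terms, and it is correct. You have also correctly isolated the delicate point: the hypotheses give $\Ext_A^i(X,Q)=0$ only in the range $1\le i\le t$, which does \emph{not} force $M\in\lerp Q$ (indeed $\Ext_A^1(M,Q)\cong\Ext_A^{t+1}(X,Q)$ by dimension shift, and the latter is uncontrolled), so the simplified resolution criterion \citep[Proposition 3.1.11]{Cr2} for modules in $\lerp Q$ is unavailable and the two $\Hom_A(-,Q)$-exact $\add Q$-coresolutions of $M$ must be compared directly. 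Your reduction to $t=1$ is fine because the dimension-shift $\Ext_A^1(Y,Q)\cong\Ext_A^t(X,Q)=0$ only uses $\Ext_A^{>0}(Q_i,Q)=0$, and the inherited vanishing $\Ext_A^i(X,Q)=0$ for $1\le i\le t-1$ is exactly what the induction hypothesis needs. For $t=1$, the splicing argument for $\ge$ is a routine diagram chase, and the $\le$ direction via Krull--Schmidt uniqueness of left $\add Q$-approximations is legitimate (both $M\to Q_1$ and $M\to P_1$ are monic left $\add Q$-approximations, so their cokernels agree up to $\add Q$-summands with the cokernel of the minimal one). The one step you lean on without proof is that $Q\ldom_A(-)$ is unchanged by adding or deleting $\add Q$-summands; the $\ge$ half is immediate from taking direct sums of coresolutions, while the $\le$ half follows from the additive $\Tor$-type characterisation of relative (co)dominant dimension recalled in this paper (the codominant version appears just before Theorem~\ref{thm7dot4}, quoted from \citep[Theorem 3.1.4]{Cr2}), together with a direct check in the degenerate cases $n\le 1$. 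If you were writing this out in full, that summand-insensitivity deserves its own short lemma, but the overall argument is sound and is a reasonable self-contained substitute for the external citation.
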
\begin{proof}
See \citep[Corollary 3.1.12]{Cr2}.
\end{proof}

The following lemma gives a sufficient condition to obtain a lower bound on the relative dominant dimension if we only know that the cokernel of $Q_{n-1}\rightarrow Q_n$ is cogenerated by $Q$.

\begin{Lemma}
\label{lemmaextendingdom}
Let $Q, \ M\in A\m$.
	Assume that $Q\ldom_A M\geq n\geq 1$ where the $A$-exact sequence
	\begin{align}
		0\rightarrow M\rightarrow Q_1\rightarrow \cdots \rightarrow Q_{n-1}, 
	\end{align}with $Q_i\in \add Q$, which remains exact under $\Hom_A(-, Q)$, can be continued to an
	$A$-exact sequence
	\begin{align}
		0\rightarrow M\rightarrow Q_1\rightarrow \cdots \rightarrow Q_{n-1}\rightarrow Y
	\end{align}with $Y\in \add Q$. Then, $Q\ldom_A M\geq n+1$.
\end{Lemma}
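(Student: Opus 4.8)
The plan is to upgrade the hypothesis $Q\ldom_A M\geq n$ by one, using the extra information that the defining sequence can be prolonged by one more term $Y\in\add Q$. The key observation is that $Q\ldom_A M\geq n$ with $Q\in\lerp Q$ means (by the simplification recalled before Corollary~\ref{cor2dot2}, i.e. \citep[Proposition 3.1.11]{Cr2}) that there is an $A$-exact sequence $0\to M\to Q_1\to\cdots\to Q_{n-1}\to Z\to 0$ with $Q_i\in\add Q$, exact under $\Hom_A(-,Q)$, and with $Z\in\lerp Q$ — equivalently the cokernel $Z$ of $Q_{n-2}\to Q_{n-1}$ lies in $\lerp Q$. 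Wait — one must be careful: the lemma as stated does \emph{not} assume $Q\in\lerp Q$, only $Q,M\in A\m$. So I would argue directly at the level of the $\Hom_A(-,Q)$-complex rather than invoking the $\lerp Q$ simplification.

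First I would set up notation: let $Z=\operatorname{coker}(Q_{n-2}\to Q_{n-1})$ (with $Q_0:=M$, $Q_{-1}:=0$ when $n=1$), so the sequence (1) splits as $0\to M\to Q_1\to\cdots\to Q_{n-1}\to Z\to 0$. The hypothesis that (1) remains exact under $\Hom_A(-,Q)$ translates, via dimension-shifting along this exact sequence, into the statement that $\Hom_A(-,Q)$ applied to $0\to Z\to \text{(next terms)}$ still has the expected exactness — concretely, that the map $\Hom_A(Q_{n-1},Q)\to \Hom_A(M,Q)$ realizing $Q\ldom_A M\geq n$ factors through $\Hom_A(Z,Q)$ appropriately. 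Now the prolongation to (2) means we have an exact sequence $0\to Z\to Y$ with $Y\in\add Q$; let $Z'=\operatorname{coker}(Q_{n-1}\to Y)=\operatorname{coker}(Z\to Y)$, giving an exact sequence $0\to Z\to Y\to Z'\to 0$.

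The core step is then to show that the extended sequence $0\to M\to Q_1\to\cdots\to Q_{n-1}\to Y$ remains exact under $\Hom_A(-,Q)$, i.e. that $\Hom_A(Y,Q)\to\Hom_A(Q_{n-1},Q)$ has image equal to the kernel of $\Hom_A(Q_{n-1},Q)\to\Hom_A(Q_{n-2},Q)$. Since exactness of (1) under $\Hom_A(-,Q)$ already gives exactness at all the spots $\Hom_A(Q_i,Q)$ for $i\leq n-1$ except possibly the image condition at $Q_{n-1}$, what remains is precisely that the surjection $\Hom_A(Y,Q)\to\operatorname{im}\bigl(\Hom_A(Q_{n-1},Q)\to\Hom_A(M,Q)\bigr)$... no: what remains is that $\Hom_A(Q_{n-1},Q)\to\Hom_A(Q_{n-2},Q)$ has kernel hit by $\Hom_A(Y,Q)$. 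Applying the left-exact functor $\Hom_A(-,Q)$ to $0\to Z\to Y\to Z'\to 0$ gives an exact sequence $0\to\Hom_A(Z',Q)\to\Hom_A(Y,Q)\to\Hom_A(Z,Q)$; and the point is that a homomorphism $M\to Q$ killing the relations, equivalently an element of $\Hom_A(Z,Q)$ (after identifying via the exact $0\to Z\to Q_{n-1}\to\cdots$ — rather $Z$ is a \emph{sub}object of $Q_{n-1}$? No, $Z$ is a quotient) — here I need to be careful about which direction the maps go. Let me instead phrase it: the failure of (2) to be exact under $\Hom_A(-,Q)$ at the $Q_{n-1}$ spot is measured by whether $\Hom_A(Z,Q)\to\Hom_A(Q_{n-1},Q)$ (induced by the surjection $Q_{n-1}\twoheadrightarrow Z$) has image equal to the kernel of $\Hom_A(Q_{n-1},Q)\to\Hom_A(Q_{n-2},Q)$ — this already holds by hypothesis on (1). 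Adjoining $Y$ replaces the requirement "image of $\Hom_A(Z,Q)$" by "image of $\Hom_A(Y,Q)$", and since $Z\hookrightarrow Y$ gives $\Hom_A(Y,Q)\twoheadrightarrow$ ... no, $\Hom_A(Y,Q)\to\Hom_A(Z,Q)$ need not be onto. Hmm.

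So the honest main obstacle is exactly this: showing the map $\Hom_A(Y,Q)\to\Hom_A(Z,Q)$ is \emph{surjective}, so that adjoining $Y$ does not lose any of the homomorphisms $Z\to Q$ that witnessed exactness of (1). Equivalently, every $A$-homomorphism $Z\to Q$ extends along the monomorphism $Z\hookrightarrow Y$. This is where the hypothesis that $Q\ldom_A M\geq n$ (not just $Q\lcodom$ or a weaker bound) must be used: I expect one shows that $Z$ is cogenerated by $Q$ \emph{and} that the relevant extension groups vanish, or — more likely, following the style of \citep[Proposition 3.1.11]{Cr2} — one identifies $\Hom_A(Z,Q)$ with a term of the complex $\Hom_A(Q_\bullet,Q)$ via the already-established exactness and then reads off surjectivity from the fact that $Y\in\add Q$ makes $\Hom_A(-,Q)$ applied to $0\to Z\to Y$ behave like the beginning of a $\Hom_A(-,Q)$-acyclic resolution. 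Concretely: apply $\Hom_A(-,Q)$ to $0\to M\to Q_1\to\cdots\to Q_{n-1}\to Y$; all terms $\Hom_A(Q_i,Q)$ and $\Hom_A(Y,Q)$ are understood; the hypothesis on (1) gives exactness everywhere except at the last two spots; chase the diagram relating $Z$, $Y$, $Z'$ and use left-exactness of $\Hom_A(-,Q)$ on $0\to Z\to Y\to Z'\to 0$ together with the identification of $\Hom_A(Z,Q)$ coming from (1) to conclude exactness at the $\Hom_A(Q_{n-1},Q)$ spot, which is exactly the statement $Q\ldom_A M\geq n+1$. I would keep the write-up to this diagram chase, flagging the extension/surjectivity point as the only nontrivial input and deriving it from the exactness hypothesis on (1) rather than from any self-orthogonality assumption.
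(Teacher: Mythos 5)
The paper's ``proof'' of this lemma is simply the citation ``See \cite[Lemma 3.1.9]{Cr2}'', so there is no argument in the present paper to compare against. Your reduction of the problem is nevertheless correct: after fixing the off-by-one in the displayed sequence (the sequence witnessing $Q\ldom_A M\geq n$ should run $0\to M\to Q_1\to\cdots\to Q_n$, with $Y$ appended after $Q_n$), the content of the lemma is exactly that the extended sequence remains $\Hom_A(-,Q)$-exact at the one new spot, i.e.\ that the restriction map $\Hom_A(Y,Q)\to\Hom_A(Z,Q)$ is surjective, where $Z=\operatorname{coker}(Q_{n-1}\to Q_n)$ is the submodule of $Y$ through which $Q_n\to Y$ factors. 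Identifying this as the crux is the right move.

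The genuine gap is that you never prove that surjectivity. You flag it as ``the honest main obstacle'' and then offer two speculative routes without carrying either out. The second route does not work as stated: $Y\in\add Q$ does \emph{not} make $\Hom_A(-,Q)$ exact on $0\to Z\to Y$ (you even note mid-paragraph that ``$\Hom_A(Y,Q)\to\Hom_A(Z,Q)$ need not be onto''). Applying $\Hom_A(-,Q)$ to $0\to Z\to Y\to Y/Z\to 0$ shows the obstruction is the connecting map $\Hom_A(Z,Q)\to\Ext^1_A(Y/Z,Q)$, and the $\Hom_A(-,Q)$-exactness hypothesis on the original truncated sequence only controls the surjection $\Hom_A(Q_n,Q)\twoheadrightarrow\Hom_A(\operatorname{im}(Q_{n-1}\to Q_n),Q)$ — it says nothing about $Y$ or about $Y/Z$. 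So the proposal pinpoints the correct key claim but leaves a real hole precisely there; the lemma should be taken from \cite[Lemma 3.1.9]{Cr2} rather than re-derived along the sketch given.
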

\begin{proof}
	See \citep[Lemma 3.1.9]{Cr2}.
\end{proof}

Observe that for every $Q, M\in A\m$, $Q\ldom_A M= DQ\lcodom_{A^{op}} DM$ (see for example \citep[Subsection 2.1]{Cr2}). As a consequence, all the previous statements can be dualised for relative codominant dimension.

If we fix $M$ to be the regular module $A$, the previous equality can be expanded to the equality $DQ\ldom_A A = Q\lcodom_{A^{op}} DA= Q\ldom_A A$ (see \citep[Corollary 3.1.5]{Cr2}). Here, the value $Q\ldom_A A$ has appeared in the literature before as the faithful dimension of $Q$ in the sense of Buan and Solberg \cite{BuanSolberg} (see \citep[Subsection 3.1]{Cr2}).

We shall now explore that modules having large relative dominant (and codominant) dimension with respect to $Q$ are $\Ext$-orthogonal to $Q$.

\begin{Lemma}
\label{lemma1dot1}
Let $Q\in A\lsmod$ and assume that there is an exact sequence
\[
0\rightarrow X_d'\rightarrow Q_d'\rightarrow \cdots \rightarrow Q_1'\rightarrow X_0'\rightarrow 0,
\]
where $Q_i'\in \add Q$. If $X_0\in \lerp Q$, then $\Ext_A^i(X_0, X_0')\simeq \Ext_A^{i+d}(X_0, X_d')$ for all $i>0$.
\end{Lemma}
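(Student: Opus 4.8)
The plan is to break the long exact sequence into short exact sequences and use dimension shifting, using that the intermediate terms lie in $\lerp Q$.

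First I would note that there is a typo/abuse in the statement: the hypothesis should read $X_0\in\lerp Q$ (and one needs the intermediate syzygies to land in $\lerp Q$ as well), so I will set $X_0' \in \lerp Q$ consistently and track what is actually needed. Factor the given exact sequence
\[
0\rightarrow X_d'\rightarrow Q_d'\rightarrow \cdots \rightarrow Q_1'\rightarrow X_0'\rightarrow 0
\]
into short exact sequences $0\to X_{j+1}'\to Q_{j+1}'\to X_j'\to 0$ for $j=0,\dots,d-1$, where $X_j'$ is the image of $Q_{j+1}'\to Q_j'$ (with the convention $X_j'$ for $j<d$ being these intermediate images). Applying $\Hom_A(X_0,-)$ to each short exact sequence gives a long exact sequence in $\Ext_A^\bullet(X_0,-)$.

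The key computation is the dimension shift: for each $j$, since $X_0\in\lerp Q$ we have $\Ext_A^i(X_0, Q_{j+1}')=0$ for all $i>0$ (as $Q_{j+1}'\in\add Q$), so the long exact sequence associated to $0\to X_{j+1}'\to Q_{j+1}'\to X_j'\to 0$ forces the connecting maps to be isomorphisms $\Ext_A^i(X_0, X_j')\simeq \Ext_A^{i+1}(X_0, X_{j+1}')$ for all $i>0$. Chaining these $d$ isomorphisms together yields $\Ext_A^i(X_0,X_0')\simeq \Ext_A^{i+d}(X_0,X_d')$ for all $i>0$, as claimed.

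The main point to be careful about — really the only obstacle — is that the dimension shift across the short exact sequence $0\to X_{j+1}'\to Q_{j+1}'\to X_j'\to 0$ only needs $\Ext_A^i(X_0,Q_{j+1}')=0$ for $i>0$, which is immediate from $X_0\in\lerp Q$ and $Q_{j+1}'\in\add Q$; in particular, no self-orthogonality hypothesis on $Q$ is used here and the intermediate images $X_j'$ need not themselves lie in $\lerp Q$. So the proof is a routine induction on $d$ via dimension shifting, with the base case $d=1$ being exactly the long exact sequence argument for a single short exact sequence. No genuinely hard step arises; the only thing to get right is bookkeeping of indices and confirming that the hypothesis "$Q_i'\in\add Q$" together with "$X_0\in\lerp Q$" is all that is invoked.
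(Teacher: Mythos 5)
Your proof is correct and is essentially the same dimension-shifting argument the paper gives: break the long exact sequence into short exact sequences $0\to X_{j+1}'\to Q_{j+1}'\to X_j'\to 0$, apply $\Hom_A(X_0,-)$, use $\Ext_A^{i>0}(X_0, Q_{j+1}')=0$ (from $X_0\in\lerp Q$ and $Q_{j+1}'\in\add Q$) to get $\Ext_A^i(X_0, X_j')\simeq \Ext_A^{i+1}(X_0, X_{j+1}')$, and chain. One small caution about your opening paragraph: there is no typo in the statement. The symbol $X_0$ (without a prime) denotes an arbitrary test module satisfying $X_0\in\lerp Q$, distinct from the modules $X_0',\dots,X_d'$ arising from the exact sequence; you should not replace the hypothesis with $X_0'\in\lerp Q$, nor do the intermediate syzygies $X_j'$ need to lie in $\lerp Q$ --- as you yourself correctly observe at the end of your proposal, after initially suggesting otherwise.
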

\begin{proof}
	Denote by $X_i'$ the kernel of $Q_i'\rightarrow Q_{i-1}'$ and $Q_0':=X_0'$. Applying $\Hom_A(X_0, -)$ to the exact sequences 
\[
0\rightarrow X_{i+1}'\rightarrow Q_{i+1}'\rightarrow X_i'\rightarrow 0 
\]
we obtain that $\Ext_A^l(X_0, X_i')\simeq \Ext_A^{l+1}(X_0, X_{i+1}')$ for $l>0$ because $X_0$ is in $\lerp Q$. Hence, for any $l>0$, we get that $\Ext_A^l(X_0, X_0')\simeq \Ext_A^{l+1}(X_0, X_1')\simeq \Ext_A^{l+d}(X_0, X_d')$.
\end{proof}

Objects living  in $\lerp Q$ arise naturally as objects having relative dominant dimension with respect to $Q$ greater or equal to the injective dimension of $Q$.

\begin{Cor}\label{Cor1dot3}
	Let $Q\in A\m$ satisfying $\Ext_A^{i>0}(Q,Q)=0$. The following assertions hold.
	\begin{enumerate}
				\item[(i)] If $\injdim_A Q\leq d$ and $Q\ldom_A X\geq d$, then $X\in \lerp Q$.
			\item[(ii)] If $\pdim_A Q\leq d$ and $Q\lcodom_A X\geq d$, then $X\in Q^{\perp}$.
			\item[(iii)]  Let $X\in \lerp Q$ and $Y\in A\m$ satisfying $\pdim_{A} X\leq d\leq Q\lcodom_A Y$. Then, $\Ext_A^{i>0}(X, Y)=0$.
		\item[(iv)] Assume that $\injdim_A Q\leq d$. Let $X, Y\in A\m$ satisfying 
\[
\sup\big\{\pdim_A X, \pdim_A Y \big\}\leq d\leq \inf\big\{Q\lcodom_A X, Q\lcodom_A Y, Q\ldom_A X, Q\ldom_A Y \big\}.
\]
Then, $\Ext_A^i(X, Y)=0$ for every $i>0$.
	\end{enumerate}
\end{Cor}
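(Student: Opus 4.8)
The plan is to prove (i) first, deduce (ii) from it via the duality $Q\ldom_A(-)=DQ\lcodom_{A^{op}}D(-)$ recalled above, prove (iii) by the dual shifting argument, and obtain (iv) by combining (i) with (iii). All four parts rely on the same mechanism: a dimension shift along short exact sequences whose middle term lies in $\add Q$, where the self-orthogonality of $Q$ makes $\Ext_A^{j}(Q_i,Q)=0$ for $j>0$ (and, once $X\in\lerp Q$, also $\Ext_A^{j}(X,Q_i)=0$ for $j>0$) whenever $Q_i\in\add Q$.

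For (i), I would take an exact sequence $0\to X\to Q_1\to\cdots\to Q_d$ with $Q_i\in\add Q$, which exists since $Q\ldom_A X\geq d$ (only this sequence is needed, not the extra $\Hom_A(-,Q)$-exactness built into the definition); put $C_0:=X$, let $C_i$ be the cokernel of $Q_{i-1}\to Q_i$ for $1\leq i\leq d$ with $Q_0:=X$, and split the coresolution into short exact sequences $0\to C_{i-1}\to Q_i\to C_i\to 0$. Applying $\Hom_A(-,Q)$ and using $\Ext_A^{j}(Q_i,Q)=0$ for $j>0$, each of these yields $\Ext_A^{j}(C_{i-1},Q)\cong\Ext_A^{j+1}(C_i,Q)$ for all $j\geq1$; composing them gives $\Ext_A^{j}(X,Q)\cong\Ext_A^{j+d}(C_d,Q)$, which vanishes because $j+d>d\geq\injdim_A Q$. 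Hence $X\in\lerp Q$. Part (ii) is then (i) applied over $A^{op}$: there $DQ$ is self-orthogonal, $\pdim_A Q\leq d$ becomes $\injdim_{A^{op}}DQ\leq d$, and $Q\lcodom_A X\geq d$ becomes $DQ\ldom_{A^{op}}DX\geq d$, so (i) gives $DX\in\lerp(DQ)$ in $A^{op}\m$, i.e.\ $X\in Q^{\perp}$.

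For (iii), I would dualise the shift: since $Q\lcodom_A Y\geq d$ there is an exact sequence $Q_d\to\cdots\to Q_1\to Y\to0$ with $Q_i\in\add Q$; put $K_0:=Y$, let $K_i$ be the kernel of $Q_i\to Q_{i-1}$ for $1\leq i\leq d$, and split into short exact sequences $0\to K_i\to Q_i\to K_{i-1}\to0$. Applying $\Hom_A(X,-)$ and using $X\in\lerp Q$ (so $\Ext_A^{j}(X,Q_i)=0$ for $j>0$), each gives $\Ext_A^{j}(X,K_{i-1})\cong\Ext_A^{j+1}(X,K_i)$ for $j\geq1$, so $\Ext_A^{j}(X,Y)\cong\Ext_A^{j+d}(X,K_d)=0$ because $j+d>d\geq\pdim_A X$. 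Finally, for (iv): the hypotheses $\injdim_A Q\leq d$ and $Q\ldom_A X\geq d$ give $X\in\lerp Q$ by (i), and then (iii) applied with $\pdim_A X\leq d\leq Q\lcodom_A Y$ gives $\Ext_A^{j}(X,Y)=0$ for all $j>0$; by symmetry of the hypotheses in $X$ and $Y$ one also gets $\Ext_A^{j}(Y,X)=0$.

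I do not anticipate a genuine obstacle. The only delicate points are the index bookkeeping in the two shifts and the observation that the definition of relative (co)dominant dimension being $\geq d$ already supplies an $\add Q$-(co)resolution of length $d$; the self-orthogonality of $Q$ is precisely what validates the shift isomorphisms, and the bounds on $\injdim_A Q$ and $\pdim_A X$ are precisely what push the shifted $\Ext$-groups into the vanishing range.
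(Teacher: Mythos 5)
Your proof is correct and follows essentially the same route as the paper's own argument: split the $\add Q$-(co)resolution into short exact sequences and dimension-shift using the self-orthogonality of $Q$, deduce (ii) by duality, and obtain (iv) from (i) and (iii). The paper phrases (iii) via Lemma~\ref{lemma1dot1}, but that lemma is exactly your explicit shift, and your side remark that (i) uses only the $\add Q$-coresolution (not the built-in $\Hom_A(-,Q)$-exactness) is a correct and harmless observation.
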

\begin{proof}
(i)	Since $Q\ldom_A X\geq d$, there exists an $A$-exact sequence $0\rightarrow X\rightarrow Q_1\rightarrow \cdots\rightarrow Q_d$ which remains exact under $\Hom_A(-, Q)$ and $Q_i\in \add Q$. Denote by $X_i$ the image of $Q_i \rightarrow Q_{i+1}$ and by $X_d$ the cokernel of $Q_{d-1}\rightarrow Q_d$. By applying $\Hom_A(-, Q)$ to the above exact sequence and since $\Ext_A^{i>0}(Q,Q)=0$, it follows that $\Ext_A^i(X, Q)\simeq \Ext_A^{i+1}(X_1, Q)\simeq \Ext_A^{i+d}(X_d, Q)$ for all $i>0$. The latter extension group is zero for all $i>0$ since $\injdim_A Q\leq d$. We infer that $X$ lies in $\lerp Q$.
	
(ii) It follows by (i) together with the observation that $Q\ldom_A X = DQ\ldom_{A^{op}}DX$.

	(iii)  Since $Q\lcodom_A Y\geq d$ there exists an exact sequence $0\rightarrow X_d'\rightarrow Q_d'\rightarrow \cdots \rightarrow Q_1'\rightarrow Y\rightarrow 0$
	where $Q_i'\in \add Q$ and remains exact after applying the functor $\Hom_A(Q,-)$. Then by Lemma~\ref{lemma1dot1} we have the isomorphism $\Ext^l_A(X, Y)\cong \Ext_A^{l+d}(X, X'_d)$, $\forall l>0$, but the latter extension group is zero since $\pdim_A X\leq d$. We conclude that $\Ext_A^i(X, Y)=0$ for all $i>0$.

(iv) By (i), the module $X$ lies in $\lerp Q$. The claim then follows by (iii).
\end{proof}

A module $T\in A\m$ is called {\bf tilting} if it has finite projective dimension over $A$, $T\in \lerp T$ and there exists an exact sequence $0\rightarrow A\rightarrow T_0\rightarrow T_1\rightarrow \cdots\rightarrow T_s\rightarrow 0$ with $T_i\in \add_AT$ for some $s$. $T$ is said to be {\bf $d$-tilting} if it is tilting and $T$ has projective dimension at most $d$. Dually, $T$ is called {\bf cotilting} (resp. {\bf $d$-cotilting}) over $A$ if $DT$ is tilting (resp. $d$-tilting) over $A^{\op}$.

The following result together with the following remark generalise \citep[Lemma~3.11]{AT}.

\begin{Lemma}
\label{lemma1dot4}
		Let $Q\in A\m$ satisfying $\Ext_A^{i>0}(Q,Q)=0$ and $\pdim_A Q\leq d$, $\injdim_A Q\leq n-d$. Then, up to isomorphism, there is at most one basic $d$-tilting module $T$ satisfying
		\begin{enumerate}
			\item[(i)] $Q\ldom_A T\geq n-d$;
			\item[(ii)] $Q\lcodom_A T\geq d.$
		\end{enumerate}In particular, if it exists, then $Q\in \add_A T$.
\end{Lemma}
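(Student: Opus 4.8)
The plan is to reduce both assertions to a single fact from tilting theory: a tilting module $T$ over a finite-dimensional algebra $A$ has exactly $n$ pairwise non-isomorphic indecomposable direct summands, where $n$ is the number of simple $A$-modules up to isomorphism. (Equivalently, $\mathrm{RHom}_A(T,-)$ induces an equivalence $\add T\simeq\add\End_A(T)$, and the derived equivalent algebra $\End_A(T)$ has $n$ simples.) Granting this, to show that two tilting modules have the same additive closure it is enough to show that their direct sum is again tilting; and besides this count, the only inputs I will use are the defining properties of tilting modules and the orthogonality consequences already recorded in Corollary~\ref{Cor1dot3}.

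For uniqueness, let $T_1,T_2$ be basic $d$-tilting modules, each satisfying (i) and (ii). Applying Corollary~\ref{Cor1dot3}(iv) with $(X,Y)=(T_1,T_2)$ and then with $(X,Y)=(T_2,T_1)$ — the hypotheses are met since $\pdim_A T_i=d$, $\injdim_A Q\le d$, and $Q\ldom_A T_i\ge d$, $Q\lcodom_A T_i\ge d$ — yields $\Ext_A^i(T_1,T_2)=0=\Ext_A^i(T_2,T_1)$ for all $i>0$. Together with $\Ext_A^{i>0}(T_j,T_j)=0$ this shows $T_1\oplus T_2$ is self-orthogonal; it has finite projective dimension, and an $\add T_1$-coresolution $0\to A\to (T_1)_0\to\cdots\to (T_1)_s\to 0$ is also an $\add(T_1\oplus T_2)$-coresolution of $A$, so $T_1\oplus T_2$ is a tilting module. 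Counting indecomposable summands forces $\add T_1=\add(T_1\oplus T_2)=\add T_2$, and since both modules are basic, $T_1\cong T_2$.

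The ``in particular'' clause is obtained by the same device. Given one basic $d$-tilting module $T$ satisfying (i) and (ii), Corollary~\ref{Cor1dot3}(i) gives $\Ext_A^i(T,Q)=0$ (using $\injdim_A Q\le d$ and $Q\ldom_A T\ge d$) and Corollary~\ref{Cor1dot3}(ii) gives $\Ext_A^i(Q,T)=0$ for $i>0$ (using $\pdim_A Q\le d$ and $Q\lcodom_A T\ge d$); combined with $\Ext_A^{i>0}(Q,Q)=0$ and $\Ext_A^{i>0}(T,T)=0$, the module $Q\oplus T$ is self-orthogonal. As before, it has finite projective dimension and the $\add T$-coresolution of $A$ is an $\add(Q\oplus T)$-coresolution, so $Q\oplus T$ is tilting; comparing its $n$ indecomposable summands with those of the basic tilting module $T$ gives $\add(Q\oplus T)=\add T$, whence $Q\in\add T$.

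The only step that is not routine bookkeeping with Corollary~\ref{Cor1dot3} and the definition of a tilting module is the appeal to the count of indecomposable summands of a generalized tilting module of finite projective dimension; this is where I would either cite the standard references or recall the short argument through the derived equivalence $\mathrm{RHom}_A(T,-)$. Verifying that the auxiliary direct sums $T_1\oplus T_2$ and $Q\oplus T$ satisfy the definition of a tilting module is immediate from what precedes.
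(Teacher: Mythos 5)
Your proof is correct and follows essentially the same route as the paper: use Corollary~\ref{Cor1dot3} to get the relevant $\Ext$-vanishing, observe the resulting direct sum is tilting, and invoke the count of indecomposable summands of a tilting module. The paper packages this slightly differently — it defines $T_1$ as the direct sum of \emph{all} indecomposable $X$ with $\pdim_A X\leq d\leq \inf\{Q\lcodom_A X, Q\ldom_A X\}$, shows $T_1$ is tilting and contains both $Q$ and $T$ as summands, then counts — whereas you compare two candidates $T_1,T_2$ (and then $Q\oplus T$) pairwise; both reductions rest on the same two ingredients, and your handling of the \lq\lq in particular\rq\rq\ clause is literally the argument the paper records as Remark~\ref{remconditiondualadd}(iii).
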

\begin{proof}
	Define $T_1$ to be the direct sum of all non-isomorphic indecomposable modules $X$ in $^{\perp}Q$ satisfying 
\begin{align*}
		\pdim_A X\leq d\leq Q\lcodom_A X
\end{align*}
In particular, $\pdim_A T_1\leq d$ and $Q\in \add_A T_1$. Since $\injdim_AQ\leq n-d$ and $Q\ldom_A T\geq n-d$, Corollary~\ref{Cor1dot3} (i) gives that $T\in \lerp Q$. Hence, $T\in \add_A T_1$. By Corollary ~\ref{Cor1dot3} (iii),  $\Ext_A^{i>0}(T_1, T_1)=0$.
 Hence, $T_1$ is a tilting $A$-module. It follows that the number of non-isomorphic indecomposable summands of $T$ and $T_1$ must coincide, and therefore, $T\simeq T_1$.
\end{proof}

\begin{Remark}\label{remconditiondualadd}
We point out that Lemma \ref{lemma1dot4} has several variations:
\begin{enumerate}
\item[(i)] According to Lemma~\ref{lemma1dot1}, the condition $Q\ldom_A T\geq n-d$ in Lemma~\ref{lemma1dot4} can be replaced by imposing that $Q$ also satisfies the following ${\{X\in A\m \ | \ Q\lcodom_A X\geq d\}\subset \lerp Q}$  or just that $T\in \lerp Q$ fixing $n=2d$. Indeed, in either case, $T_1$ can be defined as the direct sum of all non-isomorphic indecomposable modules $X$ belonging to $\lerp Q$ satisfying ${\pdim_A X\leq d\leq Q\lcodom_A X}$.

\item[(ii)] Since $T$ is a tilting $A$-module if and only if $DT$ is a cotilting $A^{op}$-module, Lemma  \ref{lemma1dot4} also holds for basic $(n-d)$-cotilting modules.

\item[(iii)] Let $Q\in A\m$ satisfying $\Ext_A^{i>0}(Q,Q)=0$ and $\pdim_{A} Q\leq d$ and $\injdim_AQ \leq n-d$. Assume further that there exists a tilting module $_AT$ satisfying
\[
{Q\ldom}_A T\geq n-d \quad \text{and} \quad {Q\lcodom}_A T \geq d.
\]
Then $Q$ lies in $\add_A T$. Indeed, by Corollary~\ref{Cor1dot3} we have $T\in {^\perp}Q$ and $T\in Q^{\perp}$. Hence $\Ext_A^{i>0}(Q\oplus T, Q\oplus T)=0$ and therefore the module $T\oplus Q$ is tilting. So $\add(T\oplus Q) = \add T$ which means that $Q\in \add_AT$.
\end{enumerate}
\end{Remark}

We finish this preliminary section by recalling the notion of relative projective dimension needed in Section~\ref{relativeGorAuspairs}. Let $\mathcal{X}$ be a full subcategory in $A\m$. The {\bf $\mathcal{X}$-projective dimension} of $N\in A\m$ is defined as the following value:
\[
\mathsf{inf}\big\{n\in \mathbb{N}\cup \{0\}  \ | \ \Ext_A^{i>n}(N, M)=0, \ \forall M\in \mathcal{X}   \big\}
\]
We write $\pdim_{\mathcal{X}}N$ for the above relative projective dimension of $N$ with respect to $\mathcal{X}$.

\section{Existence of tilting modules with higher relative dominant dimension}\label{sec3}

In this section, we will assume that $\pdim_A Q\leq 1$. In this case, Adachi and Tsukamoto proved in \citep[Proposition 3.5(2)]{AT} that the existence of tilting modules satisfying $Q\lcodom_A T\geq d$ completely determines the exact coresolution of $A$ by $\add_A T$.

\begin{Prop}\label{Prop2dot1}
	Let $Q\in A\m$ satisfying $\Ext_A^{i>0}(Q,Q)=0$, $\pdim_A Q\leq 1$ and $\injdim_A Q\leq d$. For a $d$-tilting module $_AT$ consider the following conditions$\colon$
\begin{enumerate}
	\item[(i)] $\{X\in A\m \ | \  Q\lcodom_A X\geq d\}\subset \lerp Q$;
	\item[(ii)] $T\in \lerp Q$;
	\item[(iii)] $Q\ldom_A T\geq d$;
\end{enumerate}	
If one of the above holds and $Q\lcodom_A T\geq d$, then $Q\in \add{_AT}$. In this case, there exists an exact sequence
	\begin{align}
		0\rightarrow A\rightarrow Q_1\rightarrow Q_2\rightarrow \cdots \rightarrow Q_d\rightarrow T_d\rightarrow0, \label{eq1}
	\end{align}
with $Q_i\in \add_A Q$, $T_d\in \add_A T$ and which remains exact under $\Hom_A(-, Q)$.	
\end{Prop}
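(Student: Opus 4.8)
The plan is to first reduce to the case where condition (ii) holds, then build the coresolution \eqref{eq1} explicitly from the tilting property of $T$ together with the relative codominant dimension hypothesis. For the reduction: by Lemma~\ref{lemma1dot1} (applied via the same argument as in Remark~\ref{remconditiondualadd}(i)), condition (i) implies (ii), and condition (iii) implies (ii) as well since $Q\ldom_A T\geq d$ forces $T\in\lerp Q$ by Corollary~\ref{Cor1dot3}(i) (using $\injdim_A Q\leq d$). So assume $T\in\lerp Q$. Since $\pdim_A Q\leq 1\leq d$ and $\injdim_A Q\leq d$ and $Q$ is self-orthogonal, and since $Q\lcodom_A T\geq d$ together with $T\in{^\perp}Q$ gives (by Corollary~\ref{Cor1dot3}(ii) applied to $A^{op}$, or directly by Remark~\ref{remconditiondualadd}(iii)) that $T\in Q^\perp$ too, we get $\Ext_A^{i>0}(Q\oplus T,Q\oplus T)=0$; hence $Q\oplus T$ is a partial tilting module of projective dimension at most $d$, and since $T$ is already $d$-tilting, $\add(Q\oplus T)=\add T$, i.e. $Q\in\add{_AT}$. (This is exactly the argument of Remark~\ref{remconditiondualadd}(iii).)

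Next I would construct the sequence. Because $T$ is a tilting module, there is an exact coresolution $0\to A\to T^0\to T^1\to\cdots\to T^s\to 0$ with $T^i\in\add_A T$. My aim is to replace the first $d$ terms $T^0,\dots,T^{d-1}$ by modules in $\add_A Q$. Here is where $\pdim_A Q\leq 1$ does the work: since $Q\lcodom_A T\geq d$, there is for each summand — hence for $T$ itself — an exact sequence $Q_d'\to\cdots\to Q_1'\to T\to 0$ with $Q_i'\in\add_A Q$ that stays exact under $\Hom_A(Q,-)$. Dualizing the perspective, I would instead work with $DQ$ and $DA$ over $A^{op}$: the hypothesis $Q\lcodom_A A=Q\ldom_A A\geq ?$ is not assumed, so I cannot simply quote a relative dominant dimension of $A$; instead I must genuinely build the coresolution of $A$. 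The cleanest route: take a minimal $\add Q$-coapproximation $0\to A\to Q_1\to C_1\to 0$ (which exists since $\add Q$ is functorially finite enough here, or construct it by hand using that $A\in{}^\perp$-ish considerations are not needed — rather, use $Q\ldom_A A$ via $DQ\lcodom_{A^{op}}DA$, but again that value is not hypothesized).

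The key mechanism I expect to use is the following iterated pushout/pullback argument. Since $T$ is tilting and $Q\in\add T$, and since $\pdim_A Q\leq 1$, one shows inductively that the cosyzygies of $A$ in its $\add T$-coresolution can be "peeled": at step $k$, one has $0\to A\to Q_1\to\cdots\to Q_k\to C_k\to 0$ remaining exact under $\Hom_A(-,Q)$, with $C_k$ still satisfying $Q\lcodom_A C_k\geq d-k$ and $C_k\in{^\perp}Q$; because $\pdim_A Q\leq 1$, the $\add Q$-cover of $C_k$ has kernel again in ${^\perp}Q$, and the sequence stays $\Hom_A(-,Q)$-exact by Lemma~\ref{lemmaextendingdom} or Corollary~\ref{cor2dot2}. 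After $d$ steps, $C_d$ is a module with $\pdim_A C_d\leq d$ lying in $\lerp Q$ and with the property that the full sequence $0\to A\to Q_1\to\cdots\to Q_d\to C_d\to 0$ is $\Hom_A(-,Q)$-exact; one then identifies $C_d=T_d\in\add_A T$ by checking $C_d$ lies in the additive closure of $T$ — this follows because $C_d$ has the defining homological properties of summands of the unique tilting module $T_1$ from Lemma~\ref{lemma1dot4}, namely $\pdim_A C_d\leq d$ and $Q\lcodom_A C_d\geq d$ and (by construction) $Q\ldom_A C_d\geq d$.

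The main obstacle I anticipate is establishing that the cokernel $C_d$ genuinely lands in $\add_A T$ rather than merely in $\lerp Q\cap Q^\perp$ with finite projective dimension. Resolving this cleanly is precisely what forces the use of Lemma~\ref{lemma1dot4} (and Remark~\ref{remconditiondualadd}): one must verify $Q\lcodom_A C_d\geq d$ (from the original $Q\lcodom_A T\geq d$ propagating through the construction, via Corollary~\ref{cor2dot2}) \emph{and} $Q\ldom_A C_d\geq d$ (which is where the $\pdim_A Q\leq 1$ hypothesis is essential, since it guarantees each cosyzygy of the $\add Q$-coresolution of $A$ stays $\Ext^1$-orthogonal to $Q$ so that Lemma~\ref{lemmaextendingdom} applies at every stage). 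A secondary technical point is bookkeeping the exactness under $\Hom_A(-,Q)$ throughout all $d$ steps simultaneously; I would handle this by a single induction whose inductive hypothesis packages both the partial coresolution and its $\Hom_A(-,Q)$-exactness, invoking Corollary~\ref{cor2dot2} to track the relative dominant dimension of the running cosyzygy.
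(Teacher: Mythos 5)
Your proof that $Q\in\add_A T$ is correct and is essentially the paper's argument: the paper points to Lemma~\ref{lemma1dot4} together with Remark~\ref{remconditiondualadd}, and you merely unfold that reduction (conditions (i), (iii) give (ii); then $T\in\lerp Q\cap Q^{\perp}$ forces $Q\oplus T$ to be tilting, hence $\add(Q\oplus T)=\add_A T$). No issue there.

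The divergence, and the gap, is in the second half. The paper obtains the coresolution \eqref{eq1} simply by citing \citep[Proposition~3.5(2)]{AT} and then observes that $\Hom_A(-,Q)$-exactness is automatic once $Q\in\add_A T$, since all terms of \eqref{eq1} then lie in $\add_A T$ and $T$ is self-orthogonal, so every syzygy lies in $\lerp Q$. You instead attempt to build the coresolution by hand, and the very first step is not justified. You posit a "minimal $\add_A Q$-coapproximation $0\to A\to Q_1\to C_1\to 0$," but nothing in the hypotheses gives a priori that $A$ embeds into $\add_A Q$ — and you flag the confusion yourself ("but again that value is not hypothesized"). The missing idea is to lift the monomorphism $A\hookrightarrow T^0$ from the tilting coresolution along a surjection $Q_1\twoheadrightarrow T^0$ supplied by $Q\lcodom_A T^0\geq 1$, using that $A$ is projective; a lift exists and is automatically injective because its composite with $Q_1\to T^0$ is. Without some such argument the construction never starts. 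There are further problems downstream: the claim that $Q\ldom_A C_d\geq d$ holds "by construction" is wrong — the coresolution $0\to A\to Q_1\to\cdots\to Q_d\to C_d\to 0$ bounds $Q\ldom_A A$, not $Q\ldom_A C_d$, which would require a separate coresolution \emph{of} $C_d$; and the appeal to Corollary~\ref{cor2dot2} to propagate $Q\lcodom_A T\geq d$ to $C_d$ is misplaced, both because that corollary concerns the dominant rather than the codominant invariant, and because $C_d$ has not yet been related to $T$ (making the identification $C_d\in\add_A T$ via Lemma~\ref{lemma1dot4} circular). Finally, note that the effort you spend tracking $\Hom_A(-,Q)$-exactness term by term through the induction is unnecessary: once the sequence is known to exist with all terms in $\add_A T$, exactness under $\Hom_A(-,Q)$ follows for free, exactly as the paper observes in its last sentence.
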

\begin{proof} If $Q\lcodom_A T\geq d$ together with one of the conditions (i), (ii) and (iii), then $Q\in \add_A T$ by Lemma \ref{lemma1dot4} and Remark \ref{remconditiondualadd}.
	For the existence of the exact sequence, see Proposition 3.5(2) of \cite{AT}. Since $Q\in \add_AT$ such exact sequence (\ref{eq1}) remains exact under $\Hom_A(-, Q)$.
\end{proof}

The following provides a higher version of \citep[Theorem 3.10]{AT}.

\begin{Theorem}
\label{thm2dot2}
Let $Q\in A\m$ satisfying $\Ext_A^{i>0}(Q,Q)=0$, $\pdim_A Q\leq 1$ and $\injdim_A Q\leq d$. Then, $Q\ldom_A A\geq 2d$ if and only if there exists a unique basic $d$-tilting module $T$ satisfying 
\begin{equation}
\label{infQdomQcodom}
\inf\big\{Q\ldom_A T, Q\lcodom_A T \big\}\geq d.
\end{equation}
\end{Theorem}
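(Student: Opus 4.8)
The plan is to deduce Theorem~\ref{thm2dot2} as a consequence of Proposition~\ref{Prop2dot1} together with the dual of Corollary~\ref{cor2dot2}, reducing the faithful-dimension statement $Q\ldom_A A \geq 2d$ to the existence of the right tilting module. Uniqueness is already guaranteed by Lemma~\ref{lemma1dot4}, since a $d$-tilting module $T$ with $\inf\{Q\ldom_A T, Q\lcodom_A T\}\geq d$ is precisely the object that Lemma is about; so the real content is the equivalence of the two existence statements.

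First I would prove the direction ``($\Leftarrow$)''. Suppose such a $T$ exists. By Proposition~\ref{Prop2dot1} (condition (iii) holds since $Q\ldom_A T \geq d$), we get $Q\in\add{_AT}$ and an exact sequence
\[
0\rightarrow A\rightarrow Q_1\rightarrow Q_2\rightarrow \cdots \rightarrow Q_d\rightarrow T_d\rightarrow0
\]
with $Q_i\in\add_A Q$, $T_d\in\add_A T$, remaining exact under $\Hom_A(-,Q)$. This shows $Q\ldom_A A \geq d$, and in fact, since the sequence stays exact under $\Hom_A(-,Q)$ and ends in $T_d$, I would apply the dual of Corollary~\ref{cor2dot2}: we have $X := T_d \in \lerp Q$ (as $T\in\lerp Q$ by Corollary~\ref{Cor1dot3}(i), using $Q\ldom_A T\geq d$ and $\injdim_A Q\leq d$), hence $\Ext_A^i(T_d, Q)=0$ for $1\leq i\leq d$, and therefore $Q\ldom_A A = d + Q\ldom_A T_d$. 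It remains to see $Q\ldom_A T_d \geq d$; this follows from $Q\ldom_A T\geq d$ together with the fact that relative dominant dimension only depends on the additive closure in an appropriate sense — more carefully, $T_d\in\add_A T$ and one checks $Q\ldom_A(T_1\oplus T_2) = \inf\{Q\ldom_A T_1, Q\ldom_A T_2\}$ for $T_i\in\add_A T$, or directly that every summand of $T$ inherits the bound. Thus $Q\ldom_A A \geq 2d$.

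Next, the direction ``($\Rightarrow$)''. Suppose $Q\ldom_A A\geq 2d$. The goal is to build the required $d$-tilting module. Since $\injdim_A Q\leq d$ and $Q\ldom_A A\geq 2d\geq d$, Corollary~\ref{Cor1dot3}(i) (applied with $X=A$) gives $A\in\lerp Q$, and by the remarks preceding Corollary~\ref{cor2dot2} there is an exact sequence $0\to A\to Q_1\to\cdots\to Q_{2d}$ with $Q_i\in\add Q$ whose cokernel of the $(d)$-th map — call it $T_d$ — lands in $\lerp Q$ and has $Q\ldom_A T_d \geq d$ (using Corollary~\ref{cor2dot2}: $Q\ldom_A A = d + Q\ldom_A T_d$, so $Q\ldom_A T_d\geq d$). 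Since $\pdim_A Q\leq 1$, the truncation $0\to A\to Q_1\to\cdots\to Q_d\to T_d\to 0$ shows $\pdim_A T_d\leq d$. Now I would take $T$ to be the basic additive generator of the module $T_1$ defined in Lemma~\ref{lemma1dot4}/Remark~\ref{remconditiondualadd}(i): the direct sum of all indecomposables $X\in\lerp Q$ with $\pdim_A X\leq d\leq Q\lcodom_A X$, which is a $d$-tilting module by the argument there; one then verifies $Q\lcodom_A T\geq d$ and $Q\ldom_A T\geq d$ by construction, with $T_d$ and $Q$ among its summands. The delicate point is establishing that this $T$ genuinely satisfies $Q\lcodom_A T\geq d$ — this is where the hypothesis $\pdim_A Q\leq 1$ and the dual of the machinery from Section~\ref{Preliminaries} (in particular the dual of Corollary~\ref{cor2dot2} applied on $A^{op}$ to $DA$, which also has faithful dimension $\geq 2d$ with respect to $DQ$) must be combined carefully.

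\textbf{Main obstacle.} I expect the hard part to be the ``($\Rightarrow$)'' direction, specifically the simultaneous control of both $Q\ldom_A T\geq d$ \emph{and} $Q\lcodom_A T\geq d$ for a single module $T$: the coresolution of $A$ by $\add Q$ naturally yields something with large relative \emph{dominant} dimension, while the dual statement on $A^{op}$ yields large relative \emph{codominant} dimension on $DA$, and one must show these can be glued into one $d$-tilting module rather than two a priori different ones. The hypothesis $\pdim_A Q\leq 1$ is exactly what makes Proposition~\ref{Prop2dot1} (hence \citep[Proposition~3.5(2)]{AT}) available to force the coresolution of $A$ to pass through $\add_A T$, which is the bridge between the two sides; without it the argument would break.
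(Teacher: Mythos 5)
Your ``$\Leftarrow$'' direction is essentially the paper's argument: apply Proposition~\ref{Prop2dot1} to obtain the coresolution, note $T\in\lerp Q$ via Corollary~\ref{Cor1dot3}(i) so that $T_d\in\lerp Q$, and conclude by Corollary~\ref{cor2dot2} (you call it ``the dual of Corollary~\ref{cor2dot2}'', but the corollary itself is what you want here — you are computing $Q\ldom_A A$ from a right $\add Q$-coresolution of $A$) together with the observation that every summand of $T$ has $Q\ldom_A$ at least $Q\ldom_A T$. That is all fine.

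The ``$\Rightarrow$'' direction has a genuine gap precisely where you flag it. Your proposed recipe — define $T$ as the ``$T_1$'' of Lemma~\ref{lemma1dot4} / Remark~\ref{remconditiondualadd}(i) and assert it is a $d$-tilting module ``by the argument there'' — is circular: in Lemma~\ref{lemma1dot4} the fact that $T_1$ is tilting is deduced from the \emph{hypothesis} that some $d$-tilting $T$ with the two relative (co)dominant bounds already exists. You cannot use it to manufacture a candidate. And the alternative you sketch (pass to $A^{\op}$, run the dominant-dimension argument on $DA$, and then ``glue'') is exactly the gluing problem you yourself identify; you never show the two $d$-tilting modules coincide.

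The paper avoids both issues by working entirely over $A$ with the \emph{full length-$2d$} coresolution $0\to A\to Q_1\to\cdots\to Q_{2d}$. It sets $X=\operatorname{im}(Q_d\to Q_{d+1})$, $Y=\operatorname{coker}(Q_d\to Q_{d+1})$, $C=\operatorname{coker}(A\to Q_1)$, and $T=Q\oplus X$. The hypothesis $\pdim_A Q\leq 1$ gives $\pdim_A C\leq 1$ and, applying $\Hom_A(Q,-)$ to $0\to A\to Q_1\to C\to 0$, that $C\in Q^\perp$ (similarly $Y\in Q^\perp$). Then the segment $0\to C\to Q_2\to\cdots\to Q_{d+1}\to Y\to 0$ together with the dual of \citep[Proposition~3.1.11]{Cr2} gives $Q\lcodom_A Y\geq d$; the dual of Lemma~\ref{lemmaextendingdom} upgrades this to $\geq d+1$ since the sequence continues into $\add Q$; and the dual of \citep[Lemma~3.1.7]{Cr2} applied to $0\to X\to Q_{d+1}\to Y\to 0$ yields $Q\lcodom_A X\geq d$. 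That is the step your proposal leaves open. Once $Q\lcodom_A X\geq d$ is in hand, $T=Q\oplus X$ is the required $d$-tilting module and uniqueness is Lemma~\ref{lemma1dot4}. So: right overall shape, correct $\Leftarrow$, but the actual mechanism for $Q\lcodom_A T\geq d$ in $\Rightarrow$ is missing, and the substitute you propose does not work.
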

\begin{proof}
	Assume that there exists a unique basic $d$-tilting module $T$ satisfying the relation $(\ref{infQdomQcodom})$. By Proposition~\ref{Prop2dot1} and  Corollary~\ref{cor2dot2}, we have
\[
Q\ldom_A A=d+Q\ldom_A T\geq 2d.
\]
Conversely, assume that $Q\ldom_A A\geq 2d$. By definition, there exists an $A$-exact sequence
\begin{align}
	0\rightarrow A\rightarrow Q_1\rightarrow \cdots\rightarrow Q_{2d}
\end{align}which remains exact under $\Hom_A(-, Q)$ and $Q_i\in \add_AQ$. Denote by $X$ the image of $Q_{d}\rightarrow Q_{d+1}$ and by $Y$ the cokernel of $Q_{d}\rightarrow Q_{d+1}$. Fix $T=Q\oplus X$. By Corollary~\ref{cor2dot2}, $Q\ldom_A X=Q\ldom_A A -d \geq d$. Hence, $Q\ldom_AT\geq d$. Denote by $C$ the cokernel of $A\rightarrow Q_1$. Thus, $\pdim_A C\leq 1$ since $\pdim_A Q\leq 1$. So, the exact sequence $0\rightarrow C\rightarrow Q_2\rightarrow \cdots\rightarrow Q_d\rightarrow X\rightarrow 0$ gives that $\pdim_A X\leq d$, and therefore $\pdim_A T\leq d$. Since $\pdim_A Q\leq 1$, applying $\Hom_A(Q, -)$ to $0\rightarrow A\rightarrow Q_1\rightarrow C\rightarrow 0$ yields that $C$ belongs to $Q^\perp$. Analogously, we obtain that $Y$ lies in $Q^\perp$.
By the dual of \citep[Proposition~3.1.11]{Cr2} the sequence $0\rightarrow C\rightarrow Q_2\rightarrow \cdots\rightarrow Q_d\rightarrow Q_{d+1}\rightarrow Y\rightarrow 0$ gives that $Q\lcodom_A Y\geq d$. By the dual of Lemma~\ref{lemmaextendingdom}, we obtain that $Q\lcodom_A Y\geq d+1$. By the dual of \citep[Lemma 3.1.7]{Cr2} on the exact sequence $0\rightarrow X\rightarrow Q_{d+1}\rightarrow Y\rightarrow 0$, we deduce that $Q\lcodom_A X\geq d$. Finally, by Lemma~\ref{lemma1dot4}, the uniqueness of $T$ follows.
\end{proof}

Note that by fixing $Q$ to be projective-injective and $d=1$ we recover \citep[Theorem 3.3.4]{zbMATH07081686}.

\begin{Remark}
 Observe that if $Q$ is injective, then $Q$ satisfies the condition (i) of Proposition \ref{Prop2dot1}. Hence, Corollary \ref{cor2dot2} implies that if there exists a $d$-tilting module with $1\leq d\leq \min\{\injdim_A A, Q\lcodom_A T \}$, then $Q\ldom_A A=d+Q\ldom_A T$. This equality also holds if $d=0$ because in such a case $\add T=\add A$. 
\end{Remark}

 Therefore, copying the proof of Theorem \ref{thm2dot2} (replacing the use of Lemma~\ref{lemma1dot4} by Remark \ref{remconditiondualadd}) in case that $1\leq d\leq \min\{Q\ldom_A A, \injdim_A A\}$ and $Q$ is injective, we obtain a sharper version of \citep[Theorem 3.10]{AT} which is one of the main results of \cite{AT}.

\begin{Cor}
\label{cor3dot4}
Let $Q$ be an injective $A$-module with projective dimension at most one. 
For any natural number $n$ and $1\leq d\leq \min\{\injdim_AA, n\}$, there exists a unique basic $d$-tilting module $T$ satisfying $Q\lcodom_A T\geq d$ and $Q\ldom_A T\geq n-d$ if and only if $Q$ has faithful dimension at least $n$.
\end{Cor}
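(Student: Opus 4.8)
The plan is to deduce Corollary~\ref{cor3dot4} from Theorem~\ref{thm2dot2} by generalising its proof so that the two ``halves'' of the coresolution of $A$ by $\add Q$ are split asymmetrically into a $d$-part and an $(n-d)$-part, rather than symmetrically into two $d$-parts. Throughout, $Q$ injective with $\pdim_A Q\leq 1$ means $Q$ automatically satisfies condition~(i) of Proposition~\ref{Prop2dot1} (indeed $\{X\mid Q\lcodom_A X\geq d\}\subset A\m={}^\perp Q$ since $\injdim_A Q\leq 1\leq d$), so all the hypotheses of Proposition~\ref{Prop2dot1}, Lemma~\ref{lemma1dot4} and Remark~\ref{remconditiondualadd} are in force with the injective dimension bound $\injdim_A Q\leq d$ playing no further role beyond $\injdim_A Q\leq 1$.

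For the forward direction, suppose such a unique basic $d$-tilting $T$ exists with $Q\lcodom_A T\geq d$ and $Q\ldom_A T\geq n-d$. Since $1\le d\le \injdim_A A$ (equivalently $d\le\injdim_AA$ and $d\ge 1$), Remark~\ref{remconditiondualadd}(iii) gives $Q\in\add_A T$, and Proposition~\ref{Prop2dot1} produces an exact sequence $0\to A\to Q_1\to\cdots\to Q_d\to T_d\to 0$ with $Q_i\in\add Q$, $T_d\in\add T$, exact under $\Hom_A(-,Q)$. Here I would use the observation in the Remark preceding the corollary: because $Q$ is injective it satisfies~(i), so $Q\ldom_A A=d+Q\ldom_A T_d$; since $T_d\in\add T$ we get $Q\ldom_A T_d\geq n-d$, hence $Q\ldom_A A\geq d+(n-d)=n$, i.e.\ $Q$ has faithful dimension at least $n$.

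For the converse, assume $Q\ldom_A A\geq n$. By definition there is an exact sequence $0\to A\to Q_1\to\cdots\to Q_n$ with $Q_i\in\add Q$, remaining exact under $\Hom_A(-,Q)$. Let $X$ be the image of $Q_d\to Q_{d+1}$, let $Y$ be the cokernel of $Q_d\to Q_{d+1}$, and set $T=Q\oplus X$. Exactly as in Theorem~\ref{thm2dot2}: $\pdim_A X\leq d$ follows from $\pdim_A Q\le 1$ and the sequence $0\to C\to Q_2\to\cdots\to Q_d\to X\to 0$ where $C=\operatorname{coker}(A\to Q_1)$ has $\pdim_A C\le 1$; applying $\Hom_A(Q,-)$ and using $\pdim_A Q\le 1$ shows $C,Y\in Q^\perp$; the dual of \citep[Proposition~3.1.11]{Cr2} applied to $0\to C\to Q_2\to\cdots\to Q_{d+1}\to Y\to 0$ gives $Q\lcodom_A Y\geq d$, the dual of Lemma~\ref{lemmaextendingdom} bumps this to $Q\lcodom_A Y\geq d+1$, and the dual of \citep[Lemma~3.1.7]{Cr2} on $0\to X\to Q_{d+1}\to Y\to 0$ yields $Q\lcodom_A X\geq d$, hence $Q\lcodom_A T\geq d$. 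The one place the argument departs from Theorem~\ref{thm2dot2} is the relative dominant dimension of $X$: instead of Corollary~\ref{cor2dot2} I invoke the preceding Remark (valid because $Q$ is injective), namely $Q\ldom_A A=d+Q\ldom_A X$ using the sequence $0\to A\to Q_1\to\cdots\to Q_d\to X\to 0$ which remains exact under $\Hom_A(-,Q)$ (it does, being a truncation of the original, with $C\in Q^\perp$ handled as above), so $Q\ldom_A X=Q\ldom_A A-d\geq n-d$. Finally, uniqueness of such a $d$-tilting module $T$ follows from Remark~\ref{remconditiondualadd} (replacing Lemma~\ref{lemma1dot4} as indicated in the sentence preceding the corollary): any $d$-tilting $T'$ with $\inf\{Q\ldom_A T',Q\lcodom_A T'\}$ meeting the asymmetric bounds still lies in ${}^\perp Q\cap Q^\perp$ and hence $\add(T'\oplus Q)=\add T'$, which pins down its indecomposable summands among those $X$ with $\pdim_A X\le d$ and $X$ in the relevant orthogonal classes. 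The main obstacle is bookkeeping: making sure the truncated sequence stays exact under $\Hom_A(-,Q)$ at the splitting point $Q_d\to X$ — this is exactly where one needs $X$ (equivalently the relevant cosyzygy) to have vanishing $\Ext^1(-,Q)$, which is supplied by $C\in Q^\perp$ and $Q$ injective — and in correctly citing the asymmetric version of the counting identity $Q\ldom_A A=d+Q\ldom_A X$ that replaces Corollary~\ref{cor2dot2}.
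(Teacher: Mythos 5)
Your proposal is correct and follows exactly the route the paper indicates, namely rerunning the proof of Theorem~\ref{thm2dot2} with the $Q$-coresolution of $A$ truncated asymmetrically at degree $d$, using that $Q$ injective makes $\lerp Q=A\lsmod$ so that condition~(i) of Proposition~\ref{Prop2dot1} and the replacement of Lemma~\ref{lemma1dot4} by Remark~\ref{remconditiondualadd}(i) go through without the symmetric bound on $Q\ldom_A T$. The step you flag as a departure is not one: the Remark preceding the corollary \emph{is} Corollary~\ref{cor2dot2} specialised to $Q$ injective, and the proof of Theorem~\ref{thm2dot2} invokes Corollary~\ref{cor2dot2} at the same point; likewise, in the forward direction you should appeal to Proposition~\ref{Prop2dot1} via its hypothesis~(i) rather than to Remark~\ref{remconditiondualadd}(iii), whose stated hypothesis requires the symmetric bound $\inf\{Q\ldom_A T,Q\lcodom_A T\}\geq d$, although with $Q$ injective the underlying argument only needs $Q\lcodom_A T\geq d$ as you observe.
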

 
The same argument holds if instead of requiring one of the enumerated assumptions of Proposition \ref{Prop2dot1} we require directly that $Q\in \add_AT$. In such a case, the condition $\injdim_AQ\leq d$ does not influence the existence of the exact sequence and so it can be dropped.

\begin{Cor}
Let $Q\in A\m$ satisfying $\Ext_A^{i>0}(Q,Q)=0$ and $\pdim_A Q\leq 1$. For any natural number $n$ and $1\leq d\leq \min\{\injdim_AA, n\}$, there exists a unique basic $d$-tilting module $T$ satisfying $Q\lcodom_A T\geq d$, $Q\ldom_A T\geq n-d$ and $Q\in \add_A T$ if and only if $Q$ has faithful dimension at least $n$.
\end{Cor}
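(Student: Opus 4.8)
The plan is to rerun the proofs of Theorem~\ref{thm2dot2} and Corollary~\ref{cor3dot4} almost verbatim, performing the substitution indicated just before the statement: the assumptions (i)--(iii) of Proposition~\ref{Prop2dot1}, which were automatic for an injective $Q$, are replaced by the requirement $Q\in\add_A T$ itself, and the assumption $\injdim_A Q\leq d$ is deleted. That assumption was used only to feed Proposition~\ref{Prop2dot1} and, via Corollary~\ref{Cor1dot3}(i), to place the tilting module in $\lerp Q$; both are now recovered from the observation that $Q\in\add_A T$ and $T\in\lerp T$ force $\Ext_A^{i>0}(T,Q)=0$, i.e.\ $T\in\lerp Q$, and symmetrically $T\in Q^{\perp}$. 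The hypothesis $\pdim_A Q\leq 1$ is still needed (it enters Proposition~\ref{Prop2dot1}); and recall that ``$Q$ has faithful dimension at least $n$'' means exactly $Q\ldom_A A\geq n$.

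Suppose first that such a basic $d$-tilting $T$ exists. Since $Q\in\add_A T$, Proposition~\ref{Prop2dot1} (in its form valid once $Q\in\add_A T$ is assumed outright, with $\injdim_A Q\leq d$ dropped) gives an exact sequence $0\to A\to Q_1\to\cdots\to Q_d\to T_d\to 0$ with $Q_i\in\add_A Q$, $T_d\in\add_A T$, which stays exact under $\Hom_A(-,Q)$. As $T_d$ and $Q$ both lie in $\add_A T$ and $T\in\lerp T$, we get $\Ext_A^{i>0}(T_d,Q)=0$, so Corollary~\ref{cor2dot2} applies and $Q\ldom_A A=d+Q\ldom_A T_d\geq d+Q\ldom_A T\geq d+(n-d)=n$.

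Conversely, suppose $Q\ldom_A A\geq n$. By \citep[Proposition~3.1.11]{Cr2} (applicable since $A,Q\in\lerp Q$) there is an $\add_A Q$-coresolution $0\to A\to Q_1\to\cdots\to Q_n$, exact under $\Hom_A(-,Q)$, whose $n$-th cosyzygy lies in $\lerp Q$; put $C_0=A$ and let $0\to C_{j-1}\to Q_j\to C_j\to 0$ be the cosyzygy sequences. A downward induction gives $C_j\in\lerp Q$ for all $j$; moreover $\pdim_A Q\leq 1$ forces $C_j\in Q^{\perp}$ for $j\geq 1$, while each such $C_j$, being a quotient of $Q_j$, is generated by $Q$. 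Set $X:=C_d$ and let $T$ be the basic module with $\add_A T=\add_A(Q\oplus X)$. A dimension shift in $0\to A\to Q_1\to\cdots\to Q_d\to X\to 0$ gives $\pdim_A X\leq d$, hence $\pdim_A T\leq d$; Corollary~\ref{cor2dot2} (legitimate as $X\in\lerp Q$) gives $Q\ldom_A X=Q\ldom_A A-d\geq n-d$, hence $Q\ldom_A T\geq n-d$. For $Q\lcodom_A X\geq d$, replace the tautological epimorphism $Q_1\twoheadrightarrow C_1$ in the $\add_A Q$-resolution $Q_1\to Q_2\to\cdots\to Q_d\to X\to 0$ of $X$ by a minimal right $\add_A Q$-approximation $g\colon Q_0'\to C_1$: it is surjective since $C_1$ is generated by $Q$, and its kernel lies in $Q^{\perp}$ because the approximation property makes $\Hom_A(Q,Q_0')\to\Hom_A(Q,C_1)$ onto and $\pdim_A Q\leq 1$. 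The result is a length-$d$ $\add_A Q$-resolution of $X$ whose $d$-th syzygy is $\ker g\in Q^{\perp}$, so $Q\lcodom_A X\geq d$ by the dual of \citep[Proposition~3.1.11]{Cr2}, hence $Q\lcodom_A T\geq d$. Self-orthogonality of $Q\oplus X$ follows from Corollary~\ref{Cor1dot3}(iii) applied to the pairs in $\{Q,X\}$ --- note this item of Corollary~\ref{Cor1dot3} uses no bound on $\injdim_A Q$ --- and together with the coresolution $0\to A\to Q_1\to\cdots\to Q_d\to X\to 0$ this makes $T$ a $d$-tilting module (the equality $\pdim_A T=d$ being handled via $d\leq\injdim_A A$ as in \citep[Theorem~3.10]{AT}), with $Q\in\add_A T$ by construction. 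Uniqueness goes as in Lemma~\ref{lemma1dot4} and Remark~\ref{remconditiondualadd}: any other basic $d$-tilting $T'$ with $Q\lcodom_A T'\geq d$ and $Q\in\add_A T'$ has $T'\in\lerp Q$, so each indecomposable summand of $T'$ lies in $\{Z\in A\m : Z\in\lerp Q,\ \pdim_A Z\leq d,\ Q\lcodom_A Z\geq d\}$; the direct sum $T_1$ of all such modules is self-orthogonal of projective dimension at most $d$ by Corollary~\ref{Cor1dot3}(iii), contains $Q\oplus X$ in its additive closure, hence is a tilting module, and comparing the numbers of non-isomorphic indecomposable summands forces $\add_A T'=\add_A T_1=\add_A T$, so $T'\simeq T$.

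The only step that is not a line-for-line transcription of the earlier proofs is obtaining $Q\lcodom_A X\geq d$ in the converse: since $d=n$ is now allowed, the detour through the cosyzygy $C_{d+1}$ used in Theorem~\ref{thm2dot2} may not be available, so one builds the required $\add_A Q$-resolution of $X$ with top syzygy in $Q^{\perp}$ directly, by swapping in a minimal $\add_A Q$-approximation of $C_1$ --- which is exactly where $\pdim_A Q\leq 1$ (making $C_1$ lie in $Q^{\perp}$ and be $Q$-generated) is indispensable. Everything else is bookkeeping that ensures $Q\in\add_A T$ takes over every role previously played by $\injdim_A Q\leq d$.
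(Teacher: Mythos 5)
Your proof is correct and, in outline, is exactly what the paper prescribes (copy the converse direction of Theorem~\ref{thm2dot2}, replacing the assumptions (i)--(iii) of Proposition~\ref{Prop2dot1} by the hypothesis $Q\in\add_A T$, and drop $\injdim_A Q\leq d$). But you have also spotted and correctly repaired a genuine gap in that prescription: when $n=d$, the $\add_A Q$-coresolution $0\to A\to Q_1\to\cdots\to Q_n$ has no term $Q_{d+1}$, so the module $Y=\operatorname{coker}(Q_d\to Q_{d+1})$ used in Theorem~\ref{thm2dot2} to establish $Q\lcodom_A X\geq d$ (via the dual of Proposition~3.1.11, the dual of Lemma~\ref{lemmaextendingdom}, and the dual of Lemma~3.1.7) simply is not available. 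Your substitute --- replace the tautological epimorphism $Q_1\twoheadrightarrow C_1$ in the resolution $Q_1\to Q_2\to\cdots\to Q_d\to X\to 0$ by a minimal right $\add_A Q$-approximation $Q_0'\twoheadrightarrow C_1$, whose kernel lands in $Q^{\perp}$ because $\Hom_A(Q,Q_0')\to\Hom_A(Q,C_1)$ is onto and $\pdim_A Q\leq 1$ --- produces a length-$d$ $\add_A Q$-resolution of $X$ with top syzygy in $Q^{\perp}$, and the dual of Proposition~3.1.11 then gives $Q\lcodom_A X\geq d$ uniformly in $n\geq d$. (You correctly note that $X\in\lerp Q\cap Q^{\perp}$, which that proposition requires, follows from $\pdim_A Q\leq 1$ and the coresolution itself.) The rest --- the forward direction via Proposition~\ref{Prop2dot1} and Corollary~\ref{cor2dot2}, the estimate $\pdim_A X\leq d$ from $\pdim_A C_1\leq 1$, the self-orthogonality of $Q\oplus X$ via Corollary~\ref{Cor1dot3}(iii), and uniqueness as in Lemma~\ref{lemma1dot4}/Remark~\ref{remconditiondualadd} --- matches the paper's intended argument step for step. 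The one point you wave at (``$\pdim_A T=d$ exactly, handled via $d\leq\injdim_A A$'') is also glossed over in the paper's proof of Theorem~\ref{thm2dot2}, so this is not a defect of your write-up relative to the source; still, it would be worth a sentence.
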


By fixing $Q$ to be projective-injective the above result generalises \citep[Theorem 3.2]{zbMATH07441895}.

\section{Relative Gorenstein--Auslander Pairs}\label{sec4}

In this section, we will propose a generalisation of the concepts of minimal Auslander--Gorenstein algebras in the sense of \cite{IS} and almost Auslander algebras in the sense of \cite{AT}. Further, our goal in this section is to show that this new concept, under suitable conditions, can be characterised in terms of the existence of a certain tilting-cotilting module (see Theorem \ref{thm3dot4}).

\begin{Lemma}\label{lemma4dot1}
Let $Q\in A\m$ satisfying $\Ext_A^{i>0}(Q,Q)=0$, $\pdim_A Q\leq 1$ and $\injdim_A Q\leq d$.
	Assume that $T$ is a $d$-tilting-cotilting module satisfying $\inf\{Q\ldom_AT, Q\lcodom_A T \}\geq d$. Then, $\injdim_A A\leq 2d$ and $\pdim_A DA\leq 2d$, that is, $A$ is $2d$-Iwanaga-Gorenstein.
\end{Lemma}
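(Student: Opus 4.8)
The plan is, for each half of the Gorenstein condition, to resolve the relevant (co)regular module by objects of $\add_A T$ \emph{in length at most $d$}, and then to bound the corresponding homological dimension by a routine dimension shift along that resolution; the length bound is the crucial point, since a (co)resolution of unknown length would only give a useless estimate.

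For $\injdim_A A\leq 2d$: since $T$ is $d$-tilting, $\Ext_A^{i>0}(Q,Q)=0$, $\pdim_A Q\leq 1$, $\injdim_A Q\leq d$, and both $Q\ldom_A T\geq d$ (condition (iii) of Proposition~\ref{Prop2dot1}) and $Q\lcodom_A T\geq d$ hold, Proposition~\ref{Prop2dot1} applies and gives $Q\in\add_A T$ together with an exact sequence
\[
0\rightarrow A\rightarrow Q_1\rightarrow\cdots\rightarrow Q_d\rightarrow T_d\rightarrow 0
\]
with $Q_i\in\add_A Q$ and $T_d\in\add_A T$. Since $T$ is $d$-cotilting we have $\injdim_A T\leq d$, and since $\injdim_A Q\leq d$ each $Q_i$ has injective dimension at most $d$; splitting the sequence into short exact sequences and iterating $\injdim_A X\leq\max\{\injdim_A Y,\injdim_A Z+1\}$ for $0\to X\to Y\to Z\to 0$ yields $\injdim_A A\leq d+d=2d$.

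For $\pdim_A DA\leq 2d$ the hypotheses on $Q$ are no longer symmetric (we only assume $\pdim_A Q\leq 1$), so rather than dualise Proposition~\ref{Prop2dot1} I would use directly that $T$ is $d$-cotilting: then $DT$ is a $d$-tilting $A^{op}$-module, so the regular $A^{op}$-module has a coresolution by $\add_{A^{op}} DT$ of length at most $\pdim_{A^{op}}DT=d$ (recall that a tilting module of projective dimension $d$ coresolves the algebra in length $\leq d$), and applying the duality $D$ yields an exact sequence
\[
0\rightarrow T_d'\rightarrow\cdots\rightarrow T_0'\rightarrow DA\rightarrow 0
\]
in $A\m$ with $T_i'\in\add_A T$. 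Since $T$ is $d$-tilting, $\pdim_A T\leq d$, so each $T_i'$ has projective dimension at most $d$, and dimension-shifting along this sequence of length $\leq d$, using $\pdim_A Z\leq\max\{\pdim_A Y,\pdim_A X+1\}$ for $0\to X\to Y\to Z\to 0$, gives $\pdim_A DA\leq 2d$. Together the two bounds say that $A$ is $2d$-Gorenstein.

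I expect no serious obstacle; the delicate points are merely to check that Proposition~\ref{Prop2dot1} applies (which is immediate here, since its hypothesis (iii) is among our assumptions; alternatively one verifies hypothesis (ii), $T\in\lerp Q$, via Corollary~\ref{Cor1dot3}(i)) and to remember to use length-bounded (co)resolutions throughout. In fact the first half can also be handled by the symmetric version of the second half — coresolve $A$ by $\add_A T$ in length $\leq d$ using that $T$ is $d$-tilting, then use $\injdim_A T\leq d$ — which shows the $Q$-hypotheses are not strictly necessary for the conclusion, but routing the argument through Proposition~\ref{Prop2dot1} keeps it within the framework of Section~\ref{sec3}.
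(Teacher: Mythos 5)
Your first half (the bound $\injdim_A A\leq 2d$) takes essentially the paper's route: both pass through Proposition~\ref{Prop2dot1} to get the coresolution $0\rightarrow A\rightarrow Q_1\rightarrow\cdots\rightarrow Q_d\rightarrow T_d\rightarrow 0$ and then dimension-shift using $\injdim_A Q\leq d$ and $\injdim_A T\leq d$. For the second half ($\pdim_A DA\leq 2d$) your argument is genuinely different and appreciably shorter. The paper starts from a length-$d$ $\add_A Q$-presentation $0\rightarrow Y\rightarrow Q_{d-1}\rightarrow\cdots\rightarrow Q_0\rightarrow DA\rightarrow 0$ supplied by $Q\lcodom_A DA\geq 2d$, proves $Y\in T^{\perp}$, extends this to an a priori unbounded $\add_A T$-resolution of $DA$, argues it is finite because $\pdim_A DA<\infty$ and must terminate precisely at $Y$ because $\Ext_A^{i\geq d+1}(DA,T)=0$, and only then concludes $Y\in\add_A T$, $\pdim_A Y\leq d$, and dimension-shifts. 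You simply take the length-$\leq d$ $\add_A T$-resolution of $DA$ that the $d$-cotilting hypothesis provides outright, and dimension-shift using $\pdim_A T\leq d$. (Notably, the paper itself invokes this very cotilting resolution — but only to record that $\pdim_A DA$ is finite; you observe that it already gives the sharp bound $2d$.) The trade-off is that the paper's longer argument also produces the structural fact that the $d$-th $\add_A Q$-cosyzygy of $DA$ lies in $\add_A T$, which is closer in spirit to what Lemma~\ref{lemma4dot2} later extracts, whereas your route is the shortest path to the stated conclusion. Your closing observation is also correct and worth noting: the $2d$-Gorenstein conclusion is a formal consequence of $T$ being $d$-tilting and $d$-cotilting and does not use the module $Q$ at all, so the $Q$-hypotheses are present for the surrounding framework rather than being forced by this particular lemma.
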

\begin{proof}
By Theorem \ref{thm2dot2} and Proposition \ref{Prop2dot1}, there exists an exact sequence
	\begin{align}
		0\rightarrow A\rightarrow Q_1\rightarrow Q_2\rightarrow \cdots\rightarrow Q_d\rightarrow X\rightarrow 0,
\label{eq7}	\end{align}where $Q_i\in \add_A Q$ and $\add_A T=\add_A(Q\oplus X)$. In particular, $\injdim_A X\leq d$ since $T$ is $d$-cotilting. Recall that if there exists an exact sequence $0\rightarrow N_1\rightarrow N\rightarrow N_2\rightarrow 0$ with $\injdim_A N\leq d\leq s$ and $\injdim_A N_2\leq s$ for some $s\geq 0$, then $\injdim_A N_1\leq s+1$. Therefore, using an inductive argument, (\ref{eq7}) gives that $\injdim_A A\leq 2d$.

Since $Q\ldom_A A=Q\lcodom_A DA\geq 2d$ (see \citep[Corollary 3.1.5]{Cr2}) we obtain an exact sequence
\begin{align}
	0\rightarrow Y\rightarrow Q_{d-1}\rightarrow \cdots\rightarrow Q_0\rightarrow DA\rightarrow0 \label{eq8}
\end{align}which remains exact under $\Hom_A(Q, -)$. Since $\injdim_A T\leq d$ we have $\Ext_A^{i\geq d+1}(DA, T)=0$.  Moreover, using the fact that $T$ is a $d$-cotilting module, there exists an exact sequence $0\rightarrow T_d\rightarrow \cdots \rightarrow T_0\rightarrow DA\rightarrow 0$, where $T_i\in \add_A T$. Since $T$ has projective dimension at most $d$, it follows that $DA$ has finite projective dimension. Suppose, for now, that $Y\in T^\perp$. Recall that every element in $T^\perp$ admits a resolution (possibly infinite) by $\add T$, see \cite[Theorem~5.4 (b)]{AuslanderReiten}. So, the exact sequence (\ref{eq8}) can be extended to an exact sequence of the form \begin{align}
\cdots \rightarrow T_r\rightarrow \cdots\rightarrow T_d\rightarrow Q_{d-1}\rightarrow \cdots\rightarrow Q_0\rightarrow DA\rightarrow 0,
\end{align}where $T_i\in \add_A T$. This resolution must be finite since $DA$ has finite projective dimension. Now, using the fact that $\Ext_A^{i\geq d+1}(DA, T)=0$ this exact sequence must terminate at $T_d$, otherwise it would split. It follows that $Y\simeq T_d\in \add_A T$, and so $\pdim_A Y\leq d$. It follows by (\ref{eq8}) that $\pdim_A DA\leq 2d$.

It remains to show that $Y\in T^\perp$. By (\ref{eq8}), $Q\lcodom_A Y\geq d$ and so there exists an exact sequence $0\rightarrow Z\rightarrow Q_{2d}\rightarrow \cdots\rightarrow Q_d\rightarrow Y\rightarrow 0$, where $Q_i\in \add_A Q\subset \add_A T$. Hence, by applying $\Hom_A(T, -)$ and using that $\Ext_A^{j}(T, T)=0$ for all $j>0$, we obtain $\Ext_A^{i>0}(T, Y)\simeq \Ext_A^{i+d}(T, Z)$ which is zero because $\pdim_AT\leq d$. This concludes the proof.
\end{proof}

Under the same conditions on the module $Q$ and $A$ being an Iwanaga-Gorenstein algebra, we show below the existence of a unique tilting-cotilting module satisfying further nice properties.

\begin{Lemma}\label{lemma4dot2}
	Let $Q\in A\m$ satisfying $\Ext_A^{i>0}(Q,Q)=0$, $\pdim_A Q\leq 1$ and $\injdim_A Q\leq d$. Assume that $A$ is an Iwanaga-Gorenstein algebra satisfying the following: 
	\begin{align}
		\injdim_A A\leq 2d\leq Q\ldom_A A.
	\end{align}Then, there exists a unique basic $d$-tilting-cotilting module $T$ satisfying 
\[
\inf\big\{Q\ldom_A T, Q\lcodom_A T\big\}\geq d \text{ and } Q\in \add_A T.
\]
\end{Lemma}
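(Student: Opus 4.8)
The plan is to let $T$ be the unique basic $d$-tilting module produced by Theorem~\ref{thm2dot2} (available since $Q\ldom_A A\geq 2d$ by hypothesis) and to prove that this $T$ is in addition a $d$-cotilting module with $Q\in\add_A T$. Once that is done, uniqueness costs nothing: a basic $d$-tilting-cotilting module satisfying $\inf\{Q\ldom_A T,Q\lcodom_A T\}\geq d$ is in particular a basic $d$-tilting module with that property, so Theorem~\ref{thm2dot2} (equivalently Lemma~\ref{lemma1dot4}) forces it to equal $T$. By Theorem~\ref{thm2dot2} together with Proposition~\ref{Prop2dot1} we already have $Q\in\add_A T$, and there is an exact sequence $0\rightarrow A\rightarrow Q_1\rightarrow\cdots\rightarrow Q_d\rightarrow X\rightarrow 0$ with $Q_i\in\add_A Q$ and $\add_A T=\add_A(Q\oplus X)$. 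Since $T$ is tilting we have $\Ext_A^{i>0}(T,T)=0$, so to recognise $T$ as $d$-cotilting it remains only to check that $\injdim_A T\leq d$ and that $DA$ admits a finite exact resolution by $\add_A T$.

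For the injective dimension I would split the above exact sequence into short exact sequences with the $Q_i$ as middle terms and apply $\Hom_A(N,-)$ for an arbitrary $N\in A\m$; since $\injdim_A Q_i\leq\injdim_A Q\leq d$, the connecting homomorphisms assemble into isomorphisms $\Ext_A^{j}(N,X)\cong\Ext_A^{j+d}(N,A)$ for every $j\geq d+1$, and the right-hand side vanishes because $\injdim_A A\leq 2d$. Hence $\injdim_A X\leq d$, and as $\injdim_A Q\leq d$ this gives $\injdim_A T\leq d$. This step, the only place where the hypothesis $\injdim_A A\leq 2d$ is used, is the dual of the injective-dimension estimate in Lemma~\ref{lemma4dot1}, run in the opposite direction.

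For the resolution of $DA$ I would reproduce the relevant portion of the proof of Lemma~\ref{lemma4dot1}. As an $A$-module, $DA$ is injective, hence trivially $DA\in T^{\perp}$, so by \cite[Theorem~5.4(b)]{AuslanderReiten} it admits a $\Hom_A(T,-)$-exact resolution by $\add_A T$; because $A$ is Gorenstein, $\pdim_A DA<\infty$, and this forces the resolution to terminate, giving an exact sequence $0\rightarrow T_s\rightarrow\cdots\rightarrow T_0\rightarrow DA\rightarrow 0$ with $T_i\in\add_A T$. Applying $D$ turns this into a finite coresolution of $A^{\op}$ by $\add DT$; since moreover $\pdim_{A^{\op}}DT=\injdim_A T<\infty$ and $\Ext^{i>0}_{A^{\op}}(DT,DT)=0$, the module $DT$ is a tilting $A^{\op}$-module, that is, $T$ is cotilting, and with $\injdim_A T\leq d$ it is $d$-cotilting. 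Combined with the first paragraph, this yields both existence and uniqueness. The subtle point is precisely the finiteness of this $\add_A T$-resolution: finiteness of $\pdim_A DA$ does not by itself force an arbitrary $\add_A T$-resolution to stop, since its syzygies may stabilise at projective dimension $d$, so one must use that the resolution furnished by \cite[Theorem~5.4(b)]{AuslanderReiten} becomes, after applying $\Hom_A(T,-)$, a genuine projective resolution over $\End_A(T)$, which is finite precisely when $DA$ has finite projective dimension.
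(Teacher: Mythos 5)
Your proof is correct and follows essentially the same route as the paper: obtain $T$ from Theorem~\ref{thm2dot2}, bound $\injdim_A T$ by dimension shifting along the $\add_A Q$-coresolution of $A$ using $\injdim_A A\leq 2d$, and finish by producing a finite $\add_A T$-resolution of $DA$. Two small differences are worth noting: you start from the observation that $DA\in T^\perp$ because $DA$ is injective, which is a bit more direct than the paper's passage through the syzygy $Y$ of its $\add_A Q$-resolution of $DA$; and you explicitly flag the point where the paper is terse, namely that $\pdim_A DA<\infty$ does not \emph{a priori} force a given $\add_A T$-resolution to stop. Your proposed justification (that $\Hom_A(T,-)$ turns the resolution into a projective $\End_A(T)^{\op}$-resolution which can be truncated when $\pdim_A DA<\infty$) is sound in spirit; perhaps the cleanest way to nail it down without invoking derived equivalence is via the cotorsion pair $({}^\perp(T^\perp),T^\perp)=(\widecheck{\add_A T},T^\perp)$: for $n\geq\pdim_A DA$ the $n$-th syzygy $K_n$ of the $\Hom_A(T,-)$-exact $\add_A T$-resolution lies in $T^\perp$, and dimension shifting gives $\Ext_A^{i>0}(K_n,Z)\cong\Ext_A^{i+n+1}(DA,Z)=0$ for all $Z\in T^\perp$, so $K_n\in\widecheck{\add_A T}\cap T^\perp=\add_A T$ and the resolution may be truncated there.
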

\begin{proof}
	By Theorem \ref{thm2dot2}, there exists a unique basic $d$-tilting module $T$ satisfying 
\[
\inf\{Q\ldom_A T, Q\lcodom_A T\}\geq d
\]
and $Q\in \add_A T$. It remains to prove that $T$ is also a $d$-cotilting module. Since $Q\lcodom_A DA\geq 2d$ there exists an exact sequence
\begin{align}
	0\rightarrow Y'\rightarrow Q^{2d}\rightarrow \cdots\rightarrow Q^{d+1}\rightarrow Q^d\rightarrow \cdots \rightarrow Q^1\rightarrow DA\rightarrow 0 \label{eq13}
\end{align}
which remains exact under $\Hom_A(Q, -)$, where $Q^i\in \add_AQ$. Denote by $Y$ the image of $Q^{d+1}\rightarrow Q^d$. So by applying $\Hom_A(T, -)$ to the short exact sequences induced by the previous exact sequence, we obtain that \begin{align}
\Ext_A^i(T, Y)\simeq \Ext_A^{i+1}(T, \ker(Q^{d+1}\rightarrow Y))\simeq \Ext_A^{i+d}(T, Y')=0
\end{align}for all $i>0$. Hence, $Y\in T^\perp$. This implies that we can extend the exact sequence $Q^d\rightarrow \cdots \rightarrow Q^1\rightarrow DA\rightarrow 0$ to an $\add_A T$-resolution. Such resolution must be finite since $\pdim_A DA\leq 2d$. Since $\Ext_A^{i>0}(T, T)=0$, it remains to show that $\injdim_A T\leq d$. To see this, recall that there exists an exact sequence
\begin{align}
	0\rightarrow A\rightarrow Q_1\rightarrow \cdots \rightarrow Q_d \rightarrow X\rightarrow 0 \label{eq12}
\end{align}
which remains exact under $\Hom_A(-, Q)$, where $Q_i\in \add_A Q$ and $\add_A T=\add_A (Q\oplus X)$. Since $\injdim_A Q\leq d$ it follows that the cokernel of $A\rightarrow Q_1$ has injective dimension at most $2d-1$. By an inductive argument on the exact sequence (\ref{eq12}), we conclude that  $X$ has injective dimension at most $d$.
\end{proof}

Combining Theorem \ref{thm2dot2} with Lemmas \ref{lemma4dot1} and \ref{lemma4dot2} we obtain the following main result of this section. 

\begin{Theorem}\label{thm3dot3}
	Let $Q\in A\m$ satisfying $\Ext_A^{i>0}(Q,Q)=0$, $\pdim_A Q\leq 1$ and $\injdim_A Q\leq d$. Then, there exists a unique basic $d$-tilting-cotilting module $T$ with \begin{align}
		\inf\big\{Q\ldom_A T, Q\lcodom_A T\big\}\geq d 
	\end{align} if and only if $A$ is an Iwanaga-Gorenstein algebra satisfying the following: 
\begin{align}
\injdim_A A\leq 2d\leq Q\ldom_A A.
\end{align} In particular, $Q\in \add_A T$.
\end{Theorem}

Observe that we can also state this result for modules $Q$ that have injective dimension at most one and projective dimension at most $d$.

\begin{Theorem}\label{thm3dot4}
	Let $Q\in A\m$ satisfying $\Ext_A^{i>0}(Q,Q)=0$, $\pdim_A Q\leq d$ and $\injdim_A Q\leq 1$. Then, there exists a unique basic $d$-tilting-cotilting module $T$ with \begin{align}
		\inf\{Q\ldom_A T, Q\lcodom_A T\}\geq d 
	\end{align} if and only if $A$ is an Iwanaga-Gorenstein algebra satisfying the following: 
	\begin{align}
		\injdim_A A\leq 2d\leq Q\ldom_A A.
	\end{align} In particular, $Q\in \add_A T$.
\end{Theorem}
\begin{proof}
	By assumption, $DQ\in A^{op}\m$ satisfies  $\Ext_{A^{op}}^{i>0}(DQ,DQ)=0$, $\pdim_A DQ\leq 1$ and $\injdim_A Q\leq d$. Observe that $T$ is a $d$-tilting-cotilting module over $A$ with 
	\begin{align}
		\inf \big\{Q\ldom_A T, Q\lcodom_A T \big\}\geq d 
	\end{align} if and only if  $DT$ is a $d$-tilting-cotilting module over $A^{op}$ with 	\begin{align}
	\inf\big\{DQ\ldom_{A^{op}} DT, DQ\lcodom_{A^{op}} DT\big\}\geq d.
\end{align} 
Therefore, the result follows from  \citep[Corollary 3.1.5]{Cr2} and Theorem \ref{thm3dot3}.
\end{proof}

Based on the above results we define the following notion.

\begin{Def}\label{defAusGorpair}
Let $A$ be a finite-dimensional algebra and $Q\in A\m$ such that $Q\in Q^{\perp}$. The pair $(A, Q)$ is called a {\bf relative $n$-Auslander--Gorenstein pair} if the algebra $A$ is a $n$-Iwanaga-Gorenstein algebra satisfying the following
\[ 
\injdim_A A\leq n\leq Q\ldom_A A
\]

The pair $(A, Q)$ is called a {\bf relative $n$-Auslander pair} if it is a relative $n$-Auslander--Gorenstein pair and $A$ has finite global dimension.
\end{Def}

In particular, if $(A, Q)$ is a relative $n$-Auslander pair, then $\gldim A\leq n$ (see for example \citep[VI, Lemma 5.5]{zbMATH00707210}). 

\begin{Remark}
	If $A$ is an $n$-minimal Auslander--Gorenstein algebra in the sense of \cite{IS}, then the pair $(A, P)$ is a relative $(n+1)$-Auslander--Gorenstein pair, where $P$ is the minimal projective-injective left $A$-module.

	In other words, the concept of relative Auslander--Gorenstein pair that we propose is obtained from replacing the dominant dimension of the algebra in the definition of minimal Auslander--Gorenstein algebras with the faithful dimension in the sense of \cite{BuanSolberg} of a given module being $\Ext$-orthogonal to itself. 
\end{Remark}

\begin{Remark}
	If $A$ is an almost $n$-Auslander--Gorenstein  algebra in the sense of \cite{AT}, then the pair $(A, I)$ is a relative $(n+1)$-Auslander--Gorenstein pair, where $I$ is the direct sum of all indecomposable injective $A$-modules with projective dimension at most one.
\end{Remark}

\begin{Remark}Let $A$ be a finite-dimensional algebra and fix $Q\in A\m$  so that $\pdim_A Q\leq d$, $\injdim_A Q\leq d$ and $Q\in Q^\perp$.

If $A$ has a simple preserving duality $(-)^\natural\colon A\m\rightarrow A\m$, then \begin{align}
	Q\ldom_A A=Q^\natural\lcodom_A DA=DQ^\natural\ldom_{A^{op}} A.
\end{align}Thus, the pair $(A, Q)$ is a relative $d$-Auslander--Gorenstein pair if and only if the pair $(A^{op}, DQ^\natural)$ is.
\end{Remark}

More generally, we have the following:

\begin{Remark}
	Let $(A, Q)$ be a relative $n$-Auslander--Gorenstein pair with $Q\in Q^\perp$, $\pdim_{A}Q \leq d$ and $\injdim_A Q\leq n-d$. Then, $\injdim_AA\leq n$, $\injdim A_A\leq n$ and $DQ\ldom_{A^{op}} A=Q\ldom_A A\geq n$. That is, $(A^{op}, DQ)$ is a relative $n$-Auslander--Gorenstein pair with $\pdim_A DQ\leq n-d$ and $\injdim_A DQ\leq d$.

In particular, $(A,Q)$ is a relative $n$-Auslander pair if and only if $(A^{op}, DQ)$ is a relative $n$-Auslander pair.
\end{Remark}

\section{Complete cotorsion pairs and global dimension}\label{sec5}

Let $A$ be a finite-dimensional algebra. Let $\mathcal{X}$ be a subcategory of $A\m$. We denote by $\widehat{\mathcal{X}}$ the subcategory of $A\m$ consisting of the $A$-modules $X$ for which there is an exact sequence $0\to X_n\to \cdots \to X_1\to X_0\to X\to 0$ for some $n\geq 0$ and with the $X_i$ in $\mathcal{X}$. We also denote by $\widecheck{\mathcal{X}}$ the subcategory of $A\m$ whose modules are the $X$ in $A\m$ such that there is an exact sequence $0\to X\to X_0\to X_1\to \cdots \to X_m\to 0$ for some $m\geq 0$ with all $X_i$ in $\mathcal{X}$.

It is known by Auslander-Reiten \citep{AuslanderReiten} that there is a bijective correspondence between basic tilting modules $T$ and complete cotorsion pairs $(\mathcal{X}, \mathcal{Y})$ where $\mathcal{Y}$ is coresolving and $\widecheck{\mathcal{Y}}=A\m$. Moreover, there is a bijective correspondence between basic cotilting modules $T$ and complete cotorsion pairs $(\mathcal{X}, \mathcal{Y})$ where $\mathcal{X}$ is resolving and $\widehat{\mathcal{X}}=A\m$. Furthermore, the tilting modules over an algebra of finite global dimension coincide with the cotilting modules.

In this section, we improve the aforementioned result of Auslander-Reiten by proving that over an algebra $A$ of finite global dimension the associated complete cotorsion pairs of the tilting and cotilting module, respectively, coincide and basically this is characterized by $A$ having finite global dimension.

\begin{Theorem}\label{thm4dot1}
	Let $A$ be a finite-dimensional algebra and $T$ a basic tilting $A$-module. The following assertions are equivalent:
	\begin{enumerate}[(i)]
		\item $A$ has finite global dimension;
		\item $T$ is a cotilting $A$-module  and $\widecheck{\widehat{\add_A T}}=A\m$;
		\item $T$ is a cotilting $A$-module and the complete cotorsion pairs $(\widecheck{\add_A T}, T^\perp)$ and $(\lerp T, \widehat{\add_A T})$ coincide;
		\item $T$ is a cotilting $A$-module and $T^\perp=\widehat{\add_A T}$;
		\item $T$ is a cotilting $A$-module  and $\widecheck{\add_A T}=\lerp T$;
		\item $T$ is a cotilting $A$-module and $A\m=\widehat{\widecheck{\add_A T}}$.
	\end{enumerate}
\end{Theorem}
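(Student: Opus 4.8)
The plan is to prove the chain of implications $(i)\Rightarrow(iii)\Rightarrow(iv)\Rightarrow(ii)$ and $(iii)\Rightarrow(v)\Rightarrow(vi)\Rightarrow(ii)$, together with $(ii)\Rightarrow(i)$, so that every statement is seen to be equivalent. The key structural input is the Auslander--Reiten correspondence recalled just before the theorem: a basic tilting module $T$ determines the complete cotorsion pair $(\widecheck{\add_A T},\, T^\perp)$ with $T^\perp$ coresolving and $\widecheck{T^\perp}=A\m$, while a basic cotilting module determines the complete cotorsion pair $(\lerp T,\, \widehat{\add_A T})$ with $\lerp T$ resolving and $\widehat{\lerp T}=A\m$. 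When $\gldim A<\infty$, every tilting module is cotilting (Auslander--Reiten), so $T$ carries \emph{both} cotorsion pairs simultaneously, and the first task is to show they agree.

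For $(i)\Rightarrow(iii)$: assume $\gldim A=n<\infty$ and let $T$ be basic tilting, hence also cotilting. I would first observe that $\add_A T\subseteq \lerp T\cap T^\perp$, so $\widecheck{\add_A T}\subseteq \lerp T$ (since $\lerp T$ is resolving, hence closed under kernels of epimorphisms and extensions, it absorbs finite coresolutions — more precisely one uses that $\lerp T$ is closed under the relevant syzygy-type operations; here finiteness of global dimension guarantees the coresolution dimension is finite) and dually $\widehat{\add_A T}\subseteq T^\perp$. To get equality I would use the characterisation of the cotorsion pairs: given $Y\in T^\perp$, the correspondence gives an $\add_A T$-resolution $0\to T_r\to\cdots\to T_0\to Y\to 0$ which is \emph{finite} because $\pdim_A Y\le \gldim A<\infty$ forces the resolution to terminate (an infinite eventually-split complex would contradict finiteness of $\pdim$), so $Y\in\widehat{\add_A T}$; this yields $T^\perp=\widehat{\add_A T}$, which is exactly $(iv)$, and dually $\widecheck{\add_A T}=\lerp T$, which is $(v)$. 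Both equalities together say the two cotorsion pairs coincide, i.e. $(iii)$. The remaining bookkeeping is light: $(iii)\Rightarrow(iv)$ and $(iii)\Rightarrow(v)$ are immediate by reading off one component of the pair; $(iv)\Rightarrow(ii)$ follows because $\widecheck{T^\perp}=A\m$ (tilting side of the correspondence) combined with $T^\perp=\widehat{\add_A T}$ gives $\widecheck{\widehat{\add_A T}}=A\m$; symmetrically $(v)\Rightarrow(vi)$ uses $\widehat{\lerp T}=A\m$ to get $A\m=\widehat{\widecheck{\add_A T}}$; and $(vi)\Rightarrow(ii)$ just swaps the roles of $\widehat{(-)}$ and $\widecheck{(-)}$, which are compatible since $\widehat{\widecheck{\mathcal{X}}}$ and $\widecheck{\widehat{\mathcal{X}}}$ both equal the smallest resolving-and-coresolving subcategory containing $\mathcal{X}$ that is also thick — or one argues directly that membership in either iterated closure is equivalent for a (co)tilting $T$.

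The crucial and only non-formal implication is $(ii)\Rightarrow(i)$. Here I assume $T$ is tilting \emph{and} cotilting with $\widecheck{\widehat{\add_A T}}=A\m$, and must deduce $\gldim A<\infty$. The idea: since $T$ is cotilting, $\widehat{\add_A T}\subseteq T^\perp$ and more importantly every module in $\widehat{\add_A T}$ has finite projective dimension, bounded by $\pdim_A T + (\text{length of the } \add_A T\text{-resolution})$; but a module $X$ with $X\in\widehat{\add_A T}$ is built from finitely many copies of $T$ by finitely many extensions/cokernels, so $\pdim_A X\le \pdim_A T<\infty$. Now $A\m=\widecheck{\widehat{\add_A T}}$ means every module $M$ has a finite coresolution $0\to M\to X^0\to\cdots\to X^m\to 0$ with each $X^i\in\widehat{\add_A T}$, hence each $\pdim_A X^i\le \pdim_A T$; walking along this finite coresolution (each step increases projective dimension by at most one, by the standard long exact sequence estimate) gives $\pdim_A M\le \pdim_A T+m<\infty$, where $m$ is bounded by the $\widehat{\add_A T}$-coresolution dimension. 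One must check $m$ is uniformly bounded over all $M$; this follows because $\widecheck{\widehat{\add_A T}}=A\m$ together with $\widehat{\add_A T}$ being coresolving-up-to-the-tilting-structure forces the relevant coresolution dimension of $A$ itself to be finite, and then the classical bound $\gldim A=\pdim_A A$ (or rather $\findim$/$\gldim$ comparisons for finite-dimensional algebras) closes the argument. The main obstacle I anticipate is precisely this last uniformity: extracting a finite \emph{uniform} bound on the coresolution length from the mere equality $\widecheck{\widehat{\add_A T}}=A\m$, rather than a length depending on $M$; I would handle it by applying the equality to the regular module $A$ (and its indecomposable summands), obtaining $\pdim_A A<\infty$, whence $\gldim A=\pdim_A {}_AA<\infty$ for a finite-dimensional algebra.
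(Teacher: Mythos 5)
Your chain of implications is a valid logical structure, but it is organized differently from the paper's, and two of your implications have genuine gaps that the paper's organization is designed to avoid.

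The paper's cycle is (iii)$\Rightarrow$(iv)$\Rightarrow$(ii)$\Rightarrow$(iii) together with (iii)$\Rightarrow$(v)$\Rightarrow$(vi)$\Rightarrow$(i)$\Rightarrow$(ii). The only non-formal steps are (i)$\Rightarrow$(ii), which is soft (from $T$ cotilting one gets a finite $\add_A T$-resolution of $DA$ by dualising the coresolution of $A^{\op}$ by $\add DT$, so $\widehat{\add_A T}$ is a coresolving subcategory containing all injectives, and $\gldim A<\infty$ then forces $\widecheck{\widehat{\add_A T}}=A\m$), and (ii)$\Rightarrow$(iii), which uses the \emph{uniqueness} side of the Auslander--Reiten correspondence: the pair $(\lerp T, \widehat{\add_A T})$ is a complete cotorsion pair meeting the hypotheses of the tilting correspondence, hence equals $(\widecheck{\add_A U}, U^\perp)$ for a unique basic tilting $U$, and one then checks $T\simeq U$ by counting indecomposable summands of $T\oplus U$. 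Notice that the paper proves $(\textrm{vi})\Rightarrow(\textrm{i})$ directly (from $X\in\widecheck{\add_A T}$ one reads off $\pdim_A X<\infty$ by descending through the coresolution, and then $\widehat{\widecheck{\add_A T}}=A\m$ gives finite projective dimension for every module), precisely so that it never needs a direct link $(\textrm{vi})\Rightarrow(\textrm{ii})$ or $(\textrm{v})\Rightarrow(\textrm{ii})$.

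The two gaps in your version are exactly at the points where you depart from this organization. First, your step $(\textrm{vi})\Rightarrow(\textrm{ii})$ — ``just swap the roles of $\widehat{(-)}$ and $\widecheck{(-)}$'' — is not a deduction; $\widehat{\widecheck{\mathcal{X}}}$ and $\widecheck{\widehat{\mathcal{X}}}$ do not coincide for a general $\mathcal{X}$, and no identification of either with a ``thick closure'' is established or available here. As written, this leaves (vi) without an implication back into the rest of the cycle, so the equivalence of the six statements is not actually closed. Second, in $(\textrm{i})\Rightarrow(\textrm{iii})$ you assert that for $Y\in T^\perp$ with finite projective dimension the (a priori infinite) $\add_A T$-resolution guaranteed by Auslander--Reiten must terminate, justified by ``an infinite eventually-split complex would contradict finiteness of $\pdim$''. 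There is no reason for the complex to be eventually split: splitting of $0\to K_{i+1}\to T_i\to K_i\to 0$ requires $\Ext_A^1(K_i, K_{i+1})=0$, which neither the vanishing $\Ext_A^1(K_i, T)=0$ nor $\pdim_A Y<\infty$ gives you directly. This termination statement is in fact correct and is used elsewhere in the paper (in Lemma~\ref{lemma4dot1}, with an additional $\Ext$-vanishing hypothesis), but it needs a real argument; the paper's route through $(\textrm{i})\Rightarrow(\textrm{ii})\Rightarrow(\textrm{iii})$ sidesteps it entirely by instead invoking the uniqueness of the tilting module attached to a cotorsion pair.

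Finally, two smaller points in your $(\textrm{ii})\Rightarrow(\textrm{i})$. The bound $\pdim_A X\le \pdim_A T$ for $X\in\widehat{\add_A T}$ is wrong; the correct bound is $\pdim_A X\le \pdim_A T + r$ where $r$ is the length of the chosen $\add_A T$-resolution. This does not damage the conclusion, since finiteness is all you need, and for a finite-dimensional algebra $\gldim A<\infty$ follows from every module (equivalently, every simple) having finite projective dimension. However, your closing sentence ``$\gldim A=\pdim_A {}_AA<\infty$'' is simply false: $\pdim_A {}_AA=0$ always, since $A$ is projective over itself. Delete that sentence; the intended and correct statement is $\gldim A=\injdim_A{}_AA$ or $\gldim A=\max_i\pdim_A S_i$, and the finiteness has already been established by the time you reach it.
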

\begin{proof}
The implications (iii)$\implies$(iv), (iii)$\implies$(v) are clear. Assume that (iv) holds. Then $\widehat{\add_A T}$ is a coresolving subcategory of $A\m$ and by \citep[Corollary 2.3(a)]{R} $\widecheck{T^\perp}=A\m$, so $(ii)$ is immediate. Analogously, it follows $(v)\implies (vi)$.

We show (vi)$\implies$(i). Let $X\in \widecheck{\add_A T}$. Then, there exists an exact sequence $0\rightarrow X\rightarrow T_0\rightarrow \cdots\rightarrow T_r\rightarrow 0$ with $T_i\in \add_A T$. Denote by $X_i$ the kernel of $T_i\rightarrow T_{i+1}$, $i=0, \ldots, r-1$. Since $T$ has finite projective dimension, so also has $X_{r-1}$. By induction on the exact sequences $0\rightarrow X_i\rightarrow T_i\rightarrow X_{i+1}\rightarrow 0$, all $X_i$, $i=0, \ldots, r-2$, have finite projective dimension. In particular, $X$ has finite projective dimension over $A$. Therefore, any module $Y\in A\m=\widehat{\widecheck{\add_A T}}$ has a finite resolution by modules having finite projective dimension over $A$. It follows that any $A$-module has finite projective dimension.

Assume that (i) holds. It follows from $DA\in T^\perp$, $DA$ having finite projective dimension and $T$ having finite injective dimension that $T$ is a cotilting $A$-module. In fact, the former argument implies that $DA\in \widehat{\add_A T}$. 
	Since $T$ is a cotilting $A$-module by \citep[Corollary 2.3(b)]{R}, $\widehat{\add_A T}$ is a coresolving subcategory of $A\m$. In particular, $\widehat{\add_A T}$ contains all injective $A$-modules. Since $\gldim A$ is finite, it follows that $\widecheck{\widehat{\add_A T}}=A\m$. So, (ii) holds.
	
	Finally, assume that (ii) holds. We aim to show that (iii) also holds. Since $T$ is cotilting, we obtain by the cotilting correspondence (see \citep[Corollary 2.3(b)]{R}) that $(\lerp T, \widehat{\add_A T})$ is a complete cotorsion pair. In particular, $\widehat{\add_A T}$ is a coresolving subcategory of $A\m$ satisfying $\widecheck{\widehat{\add_A T}}=A\m$. Hence by \citep[Corollary 2.3(a)]{R}, there exists a unique basic tilting $A$-module $U$ satisfying \begin{align}
		(\widecheck{\add_A U}, U^\perp)=(\lerp T, \widehat{\add_A T}).
	\end{align}
In particular, $U\in \widecheck{\add_A U}=\lerp T$, that is, $\Ext_A^{i>0}(U, T)=0$. Furthermore, $U\in U^\perp=\widehat{\add_A T}$. So, there exists an $A$-exact sequence $0\rightarrow T_r\rightarrow \cdots \rightarrow T_0\rightarrow U\rightarrow 0$, where $T_i\in \add_AT$. By applying $\Hom_A(T, -)$ to this exact sequence we deduce that $\Ext_A^{i>0}(T, U)=0$. Thus, $\Ext_A^{i>0}(U\oplus T, U\oplus T)=0$, $U\oplus T$ has finite projective dimension and clearly $A\in \widecheck{\add_A T}\subset \widecheck{\add(T\oplus U)}$. This means that $T\oplus U$ is a tilting $A$-module, and so it has the same number of non-isomorphic indecomposable summands as $T$, and as $U$. Hence, $T\simeq U$.
\end{proof}

\begin{Example}
        Let $A$ be the following bound quiver algebra
        \begin{equation}
                \begin{tikzcd}
                        1 \arrow[r, "\gamma"] \arrow[out=150,in=210,loop,"\alpha", swap] & 2 \arrow[out=-30,in=30,loop,swap,"\beta"]
                \end{tikzcd}, \quad
                \begin{aligned}
\alpha^2=\beta^2=0, \ \gamma\alpha=\beta\gamma.
                \end{aligned}
        \end{equation}
The projective cover of the simple module $1$ is also an injective module with socle $2$, whereas the injective hull of $1$, $I(1)$, fits into an exact sequence $0\rightarrow 1\rightarrow I(1)\rightarrow 1\rightarrow 0$. Furthermore, $1$ has infinite injective dimension, and consequently, $A$ has infinite global dimension. Moreover, $A$ is an Iwanaga-Gorenstein algebra with Gorenstein dimension one. Fix $T$ to be the direct sum of all non-isomorphic indecomposable injective $A$-modules. So, $T$ is a tilting-cotilting module with injective dimension zero and projective dimension one.
Observe that $\widehat{\add_A T}$ is the subcategory of $A\m$ of all modules of finite injective dimension, $\widecheck{\add_A T}$ is the subcategory of $A\m$ of all injective modules, ${}^\perp T=A\m$ and $T^\perp$ is the subcategory of $A\m$ formed of all Gorenstein injective $A$-modules. It follows that $\widehat{\widecheck{\add_A T}}=\widecheck{\widehat{\add_A T}}$ is exactly the subcategory of all modules of finite injective dimension which is strictly contained in $A\m$. We can also see that there are Gorenstein injective $A$-modules which are not $A$-injective. For example, $1$ is a Gorenstein injective $A$-module. In fact, applying $\Hom_A(-, I(1))$ to the minimal projective resolution of $I(1)$, we obtain that $\Ext_A^1(T, I(1))=0$.
\end{Example}

\section{Relative Gorenstein-Auslander Pairs}
\label{relativeGorAuspairs}

In this section, we will prove the main result of this paper. In  this direction, we start with the following result.

\begin{Lemma}
\label{thmQdomdimTd}
Let $(A, Q)$ be a $n$-relative Auslander--Gorenstein pair with $\pdim_A Q\leq d$ and ${\injdim_A Q\leq n-d}$ for some non-negative number $d$. Then there exists a basic $d$-tilting module $T$ satisfying $${Q\ldom_A T\geq n-d} \quad \text{and} \quad Q\lcodom_A T\geq 1.$$ 
\begin{proof}
If $d=0$, then the claim holds for $T:=A$. If $d=n$, then the claim follows for $T:=DA$. Suppose now that $0<d<n$. By assumption, there exists an exact sequence 
\begin{equation}
\label{exactseqQ2d}
0\rightarrow A\rightarrow Q_1\rightarrow \cdots \rightarrow Q_{n} \rightarrow Z\rightarrow 0
\end{equation}
which remains exact under $\Hom_A(-,Q)$. Denote by $X$ the image of $Q_d \to Q_{d+1}$. By dimension shift we have $\Ext^i_A(X, Q)\cong \Ext^{i+n-d}_A(Z, Q)=0$ for all $i\geq 1$ since $\injdim_AQ\leq n-d$. Hence $X\in {^\perp}Q$. From the exact sequence $0\to X\to Q_{d+1}\to \cdots \to Q_{n}$ it follows that $Q\ldom_AX\geq n-d$. From the exact sequence $0\rightarrow A\rightarrow Q_1\rightarrow \cdots \rightarrow Q_{d} \rightarrow X\rightarrow 0$, we obtain that $\Ext^i_{A}(Q, X)\cong 
\Ext^{i+d}_A(Q, A)=0$ for all $i>0$ since  $\pdim_A Q\leq d$. This implies that the exact sequence $0\to X\to Q_{d+1}\to \cdots \to Q_{n}\to Z\to 0$ remains exact under $\Hom_A(Q,-)$. So $Q\lcodom_AZ\geq n-d$ and therefore by the dual of Lemma~\ref{lemmaextendingdom} we get that $Q\lcodom_AZ\geq n-d+1$. This shows that $Q\lcodom_A{X}\geq 1$. By the sequence $(\ref{exactseqQ2d})$ and using that $\pdim_AQ\leq d<\infty$ we also get that $\pdim_AZ<\infty$. Since the algebra $A$ is $n$-Iwanaga-Gorenstein, it follows that  $\findim{A}\leq n$ (see for instance \cite[Proposition~6.10]{AuslanderReiten}) and therefore $\pdim_AZ\leq n$. By dimension shift we have $\pdim_AX\leq d$. Set $T:=Q\oplus X$. We have shown that $\pdim_AT\leq d$. Moreover, we have observed that $X\in {^\perp}Q$ and $X\in Q^{\perp}$. As $\Ext^i_{A}(X,X)\cong \Ext^{i+d}_A(X,A)=0$ for all $i>0$ since $\pdim_AX\leq d$ we infer that $\Ext^i_{A}(T, T)=0$ for all $i>0$. We conclude that $T$ is a $d$-tilting module such that $Q\ldom_A T\geq n-d$ and $Q\lcodom_A T\geq 1$.
\end{proof}
\end{Lemma}

We will now restrict to the case of finite-dimensional algebras with finite global dimension.

\begin{Prop}
\label{prop QdomdimA}
Let $A$ be a finite-dimensional algebra with finite global dimension. Let $Q\in A\m$ satisfying $\Ext_A^{i>0}(Q,Q)=0$, $\pdim_A Q\leq d$ and $\injdim_A Q\leq n-d$ for some number $d\in \{0, \ldots, n\}$. Assume that there exists a $d$-tilting and $(n-d)$-cotilting module $_AT$ satisfying 
\[
Q\ldom_A T\geq n-d \ \ \text{and} \ \ Q\lcodom_A T \geq d.
\]
Then we have: $Q\ldom_AA \geq n \geq \gldim A$.
\end{Prop}
\begin{proof}
Assume that $d=0$. Since $T$ is a $0$-tilting module it is clear that $\add T=\add A$ and thus the result is clear for $d=0$. For $d=n$, $T$ is a $0$-cotilting module and in such a case $\add T=\add DA$. Hence, the result is also clear for $d=n$. Assume that $0<d<n$.
By Remark~\ref{remconditiondualadd} (iii), the module $Q$ lies in $\add_AT$. Consider the cotilting resolution
\[
0\to T_{n-d}\to \cdots \to T_0 \to DA\to 0. 
\]
It remains exact under $\Hom_A(Q, -)$ and $T_i\in \add_A T$. Then it follows that $Q\lcodom_A DA\geq d$ and therefore $Q\ldom_AA\geq d$. So, there exists an exact sequence 
\begin{align}
	0\to A\to Q_1\to \cdots \to Q_d\to X\to 0 \label{eq23}
\end{align}
which remains exact under $\Hom_A(-,Q)$ and $Q_i\in \add_A Q$. Since $\pdim_AQ<\infty$ we get that $\pdim_AX<\infty$. Note that $\pdim_AX\geq d$ since otherwise the exact sequence (\ref{eq23}) splits. Also the exactness of (\ref{eq23}) gives that $\Ext_A^i(X, Q)=0$ for $i=1, \ldots, d$ and $\Ext_A^{i+d}(X, Q)\simeq \Ext_A^i(A, Q)=0$. Thus, $X\in \lerp Q$.

 Since $Q\lcodom_A T \geq d$, there exists an exact sequence 
\begin{align}
0\to Y\to Q_{d}'\to \cdots \to Q_1' \to T\to 0  \label{eq_24}
\end{align}
which remains exact under $\Hom_A(Q,-)$ and $Q_i'\in \add_A Q$. 
The exactness of (\ref{eq_24}) together with $\pdim_AQ\leq d$ gives that $Y\in Q^{\perp}$.
Using the exact sequence (\ref{eq23}), it follows that $\pdim_{Q^{\perp}}X\leq d + \pdim_{Q^{\perp}}A=d$ and therefore $\Ext_A^{i+d}(X,Y)=0$ for every $i>0$. Observe that Lemma~\ref{lemma1dot1} gives that $\Ext^i_A(X, T)\cong \Ext^{i+d}_A(X, Y)$ for all $i>0$. Thus, $X\in \lerp T$.

Since $\gldim A<\infty$ the module $X$ lies in $\widecheck{\add_A T}$ by Theorem~\ref{thm4dot1} and therefore we can extend the exact sequence $(\ref{eq23})$ using summands of the tilting module $T$. But if we extend it one more step, then it splits because $\pdim_AT\leq d$. We infer that $X\in \add_AT$.

Now, the exact sequence (\ref{eq23}) gives that $Q\ldom_A A=Q\ldom_A X+d\geq Q\ldom_A T+ d \geq n-d+d=n$. For each ${N\in A\m}$, applying $\Hom_A(N, -)$ to (\ref{eq23}) yields that $\Ext_A^{d+\injdim_A T+1}(N, A)\simeq \Ext_A^{\injdim_A T +1}(N, X)=0$ since $\injdim_A X\leq \injdim_A T$. Thus, $\gldim A =\injdim_A A\leq \injdim_A T+d\leq n$.
\end{proof}

We will now prove the second half of the main result.

\begin{Prop}
\label{thmcotiltilbydimd}
Let $Q\in A\m$ satisfying $\Ext_A^{i>0}(Q,Q)=0$, $\pdim_A Q\leq d$ and $\injdim_A Q\leq n-d$ for a number $d\in \{0, \ldots, n\}$. Assume that $\gldim{A}\leq n\leq Q\ldom{A}$. Then there exists a unique basic $d$-tilting and $(n-d)$-cotilting module $T$ satisfying $Q\ldom_A T\geq n-d$ and $Q\lcodom_A T\geq d$.
\end{Prop}
\begin{proof}
For $d=0$ we fix $T$ the basic module satisfying $\add T=\add A$ while for $d=n$ we fix $T$ a basic module satisfying $\add T=\add DA$. The uniqueness for such cases is clear for example by Lemma \ref{lemma1dot4}. Let $d$ be a natural number smaller than $n$. 
By Lemma~\ref{thmQdomdimTd}, there exists a $d$-tilting module $T$ with $Q\ldom_A T\geq n-d$. Recall that ${T=Q\oplus X}$, where $X$ fits in the exact sequence 
\begin{equation}
	\label{exactseqforA}
	0\to A\to Q_1\to \cdots \to Q_d\to X\to 0
\end{equation} which remains exact under $\Hom_A(-, Q)$ and $Q_i\in \add Q$. Note also that
\[
\gldim{A}^{op}\leq n\leq Q\ldom{A}=Q\lcodom{DA}=DQ\ldom{A^{op}}
\]
and therefore Theorem~\ref{thmQdomdimTd} gives a $(n-d)$-tilting module $T_{op}$ with $DQ\ldom_{A^{op}}T_{op}\geq n-(n-d)=d$. Set $C:=DT_{op}$ and note that $C=Y\oplus Q$ where $Y$ appears in the following exact sequence:
\begin{equation}
\label{exactseqforDA}
0\rightarrow Y\rightarrow Q'_{n-d}\rightarrow \cdots \rightarrow Q'_1 \rightarrow DA\rightarrow 0
\end{equation}
with $Q_i\in \add Q$ since $Q\lcodom{DA}\geq n$. So, the module $C$ is $(n-d)$-cotilting with $Q\lcodom_AC\geq d$. We shall proceed to show that $\add_A C=\add_A T$. By construction we have $Q\in\add_A{C}$ and $Q\in \add_{A}T$. Using the exact sequence (\ref{exactseqforA})
together with $\injdim_AA\leq n$ and $\injdim_AQ\leq n-d$, it follows that $\injdim_AX\leq n-d$ and therefore $\injdim_AT\leq n-d$. Moreover, using the exact sequence $(\ref{exactseqforDA})$ together with $\pdim_ADA\leq n$ and $\pdim_AQ_i'\leq d$, we obtain that $\pdim_AY\leq d$ and therefore $\pdim_AC\leq d$. Hence, we infer that both $C$ and $T$ are $d$-tilting and $(n-d)$-cotilting modules and both of them belong to ${{^\perp}Q}$.

Since $C$ lies in ${{^\perp}Q}$ and $\pdim_AC\leq d$ applying Lemma~\ref{lemma1dot1} on the exact sequence (\ref{exactseqforA}), we obtain that $\Ext^i_A(C, X)\cong \Ext_A^{i+d}(C, A)=0$ for all $i>0$.

 Thus, since $T=Q\oplus X$ and $\Ext_A^{i>0}(C,Q)=0$ we have $\Ext_A^{i>0}(C,T)=0$. From Theorem~\ref{thm4dot1} we have that $T$ belongs to $C^{\perp}=\widehat{\add_A C}$. This means that there exists an exact sequence $0\to C_s\to \cdots \to C_0\to T\to 0$ for some $s\geq 0$ and $C_i\in \add{C}$. This exact sequence remains exact under $\Hom_A(Q,-)$ because $C_i, T\in Q^\perp$. Hence, since $Q\lcodom_AC_i\geq d$ for all $0\leq i\leq s$ we conclude that $Q\lcodom_AT\geq d$. Analogously, $DQ\lcodom_{A^{op}} DC\geq n-d$. The uniqueness now follows by Lemma \ref{lemma1dot4} and in particular $\add_AC=\add_A T$.
\end{proof}

Actually, the statement of Proposition \ref{thmcotiltilbydimd} remains valid for general relative Auslander--Gorenstein pairs. 

\begin{Prop}
\label{prop: improvement}
Let $Q\in A\m$ satisfying $\Ext_A^{i>0}(Q,Q)=0$, $\pdim_A Q\leq d$ and $\injdim_A Q\leq n-d$ for a non-negative number $d\in \{0, \ldots, n\}$. Assume that  $(A, Q)$ is a relative $n$-Auslander-Gorenstein pair. Then there exists a unique basic $d$-tilting and $(n-d)$-cotilting module $T$ satisfying $Q\ldom_A T\geq n-d$ and $Q\lcodom_A T\geq d$.
\end{Prop}
\begin{proof} 
Like in Proposition \ref{thmcotiltilbydimd}, the result is clear for $d\in \{0, n\}$. Suppose that $1\leq d\leq n-1$.
By Lemma~\ref{thmQdomdimTd}, there exists a basic $d$-tilting module $T$ satisfying $Q\ldom_A T\geq n-d$ and there exists an exact sequence
\[
0\to A\to Q_0\to \cdots \to Q_{d-1}\to X\to 0
\] 
with $\add{T}=\add{Q\oplus X}$. Since $\injdim_A Q\leq n-d$ and $\injdim_A A\leq n$ we obtain that $\injdim_A X\leq n-d$. So $T$ is also a $(n-d)$-cotilting module. Consider the minimal injective resolution of $T$:
\[
0\to T\to I_0\to I_1 \cdots \to I_{t}\to 0
\] 
with $t\leq n-d$. Since $Q$ lies in $\add{T}$, the above sequence remains exact under $\Hom_A(Q,-)$. Using the fact that $Q\lcodom_A I_i\geq n$ we get from \cite[Lemma~3.1.7]{Cr2} that 
\[
Q\lcodom_A \mathsf{Ker}(I_{t-1}\to I_t)\geq n-1.
\]
We infer that $Q\lcodom_A T\geq n-t\geq n-(n-d) = d$.
\end{proof}

The two approaches of Proposition~\ref{thmcotiltilbydimd} and Proposition~\ref{prop: improvement} yield the following identification:

\begin{Cor}
\label{corollary of the two props}
Let $Q\in A\m$ satisfying $\Ext_A^{i>0}(Q,Q)=0$, $\pdim_A Q\leq d$ and $\injdim_A Q\leq n-d$ for some natural number $d$ smaller than $n$. Assume that  $(A, Q)$ is a relative $n$-Auslander-Gorenstein pair. Then $\add{Q\oplus X} = \add{Q\oplus Y}$ where $X$ fits into an exact sequence 
\[
0\to A\to Q_1\to \cdots \to Q_{d}\to X\to 0
\] 
which remains exact under $\Hom_A(-,Q)$ and $Y$ fits into an exact sequence 
\[
0\to Y \to Q_{n-d}'\to \cdots \to Q_{1}'\to DA\to 0
\]
which remains exact under $\Hom_A(Q,-)$ with $Q_i, Q_i'\in \add_A Q$ for all $i$.
\end{Cor}
\begin{proof}
By Proposition~\ref{prop: improvement}, the module $Q\oplus X$ is $d$-tilting and $(n-d)$-cotilting with $Q\ldom_A T\geq n-d$ and $Q\lcodom_A T\geq d$. Dualising and since $(A^{\op}, DQ)$ is a relative $n$-Auslander-Gorenstein pair we get that the module $DQ\oplus DY$ is $(n-d)$-tilting and $d$-cotilting with $DQ\ldom_{A^{\op}} DQ\oplus DY\geq n-(n-d)=d$ and $DQ\lcodom_{A^{\op}} DQ\oplus DY\geq n-d$. This implies that $Q\oplus Y$ is a $(n-d)$-cotilting and $d$-tilting module satisfying $Q\lcodom_{A} Q\oplus Y\geq d$ and $Q\ldom_{A} Q\oplus Y\geq n-d$. By Lemma~\ref{lemma1dot4}, we infer that $\add{Q\oplus Y}=\add{Q\oplus X}$.
\end{proof}

Now combining the previous results we obtain a characterization of relative $n$-Auslander pairs in terms of the existence of a $d$-tilting $(n-d)$-cotilting module with the property that the sum of relative dominant dimension with respect to $Q$ with the relative codominant dimension with respect to $Q$ is greater or equal to $n$. 

\begin{Theorem}\label{mainthm}
Let $Q\in A\m$ with $Q\in Q^{\perp}$, $\pdim_{A}Q\leq d$, $\injdim_AQ\leq n-d$ for some non-negative numbers $n$ and $d$ with $n\geq d$. Then there exists a unique basic $d$-tilting $(n-d)$-cotilting module $T$ satisfying the following conditions
\begin{enumerate}[(i)]
\item $Q\ldom_AT\geq n-d$, 

\item $Q\lcodom_AT\geq d$, and

\item  $T^\perp=\widehat{\add_A T}$
\end{enumerate}
if and only if 	$(A, Q)$ is a relative $n$-Auslander pair.
\end{Theorem}
\begin{proof}
Assume that $(A, Q)$ is a relative $n$-Auslander pair. The existence of the tilting-cotilting module in the desired conditions follows from Proposition~\ref{thmcotiltilbydimd} and Theorem~\ref{thm4dot1}. Conversely, if there exists a basic $d$-tilting-cotilting module satisfying $(i), (ii), (iii)$, then $A$ has finite global dimension by Theorem \ref{thm4dot1}. By Proposition \ref{prop QdomdimA}, the result follows.
\end{proof}

It is of particular interest, the case where $n$ is exactly $2d$ characterising the even relative Auslander pairs. This result also generalises \citep[Lemma 1.1]{zbMATH06685118} by fixing $Q$ to be projective-injective and $d=1$.

 Theorem \ref{mainthm} also generalises \citep[Theorem 3.2]{zbMATH07441895} in the situation that $A$ has finite global dimension. Another consequence of Theorem \ref{mainthm} is that we can associate more than one tilting-cotilting to a relative Auslander-Gorenstein pair if $\pdim_A Q+\injdim_AQ$ is smaller than the global dimension of $A$, but each choice of $d$ so that $\pdim_A Q\leq d$ and $\injdim_A Q\leq n-d$ determines uniquely the tilting-cotilting having projective dimension at most $d$ and injective dimension at most $n-d$ associated with the relative Auslander-Gorenstein pair.
	
We leave as open question whether Proposition \ref{prop QdomdimA} remains valid  for Iwanaga-Gorenstein algebras.

\section{Hemmer-Nakano type results}\label{sec7}
In this section, we prove a Hemmer-Nakano type result for the Schur functor from the module category of a finite-dimensional algebra $A$ with a module $Q$ satisfying the properties of Theorem~\ref{thmcotiltilbydimd} and the module category of the endomorphism algebra of the unique basic tilting-cotilting module (as constructed in Theorem~\ref{thmcotiltilbydimd}).

Define $Q\lcodom_A \widehat{\add T}:=\inf\{Q\lcodom_A M \ | \ M\in \widehat{\add T}\}$.  The following is the non-quasi-hereditary version of \citep[Theorem 5.3.1(a)]{Cr2}.
\begin{Lemma}\label{lemma5dot1}	Let $Q\in A\m$ satisfying $\Ext_A^{i>0}(Q,Q)=0$.
	Assume that $T$ is a tilting module with $T^\perp=\widehat{\add T}$. Then, \begin{align}
		Q\lcodom_A T=Q\lcodom_A \, \widehat{\add T}.
	\end{align}
\end{Lemma}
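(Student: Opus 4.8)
The plan is to prove both inequalities separately. Since $\widehat{\add T}$ contains $\add T$, the inequality $Q\lcodom_A \widehat{\add T} \leq Q\lcodom_A T$ is immediate from the definition of the infimum, so the content is in showing $Q\lcodom_A M \geq Q\lcodom_A T$ for every $M \in \widehat{\add T}$. Set $n := Q\lcodom_A T$; we may assume $n \geq 1$ (if $n=0$ there is nothing to prove). First I would record the basic observation that, since $T^\perp = \widehat{\add T}$ and $Q \in T^\perp$ (indeed $Q$ is a summand of $T$ by Remark~\ref{remconditiondualadd}, or directly $\Ext^{i>0}_A(T,Q) = 0$ because any $\add T$-coresolution of $Q$ being part of $T^\perp$... — more simply, $\Ext^{i>0}_A(T,T)=0$ gives $T \in T^\perp$ and hence $Q \in T^\perp$ as a summand), every module $M \in \widehat{\add T} = T^\perp$ admits a finite resolution $0 \to T_s \to \cdots \to T_1 \to T_0 \to M \to 0$ with $T_i \in \add T$, and moreover this resolution stays exact under $\Hom_A(Q,-)$ because each syzygy lies again in $T^\perp \subseteq Q^\perp$ (using $\pdim_A$ considerations or directly that $T^\perp$ is closed under kernels of epimorphisms between its objects, which holds as $T^\perp$ is a coresolving — no, resolving — subcategory; here I would invoke that $\widehat{\add T}$ is resolving, so syzygies of its objects stay inside).

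Next, the key step: I would prove by induction on $s$ (the length of the $\add T$-resolution of $M$) that $Q\lcodom_A M \geq n$. The base case $s=0$ is $M \in \add T$, which gives $Q\lcodom_A M \geq Q\lcodom_A T = n$ since relative codominant dimension only improves on summands. For the inductive step, write $0 \to K \to T_0 \to M \to 0$ with $T_0 \in \add T$ and $K \in \widehat{\add T}$ of resolution-length $s-1$, so $Q\lcodom_A K \geq n$ by induction, and $Q\lcodom_A T_0 \geq n$. Now I would splice together: take an $\add Q$-coresolution of $T_0$ of length $n$ realizing $Q\lcodom_A T_0 \geq n$, and one of $K$ of length $n$ for $Q\lcodom_A K \geq n$; the horseshoe-type argument for relative codominant dimension (the dual of the standard "middle term" lemma — essentially the dual of \citep[Lemma 3.1.7]{Cr2} applied to $0 \to K \to T_0 \to M \to 0$, all three terms lying in $Q^\perp$) yields $Q\lcodom_A M \geq n$. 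Concretely: from the short exact sequence with $Q\lcodom_A K \geq n$ and $Q\lcodom_A T_0 \geq n$ one deduces $Q\lcodom_A M \geq n$ by the dual of \citep[Lemma 3.1.7]{Cr2}, since all three modules are $\Ext$-orthogonal to $Q$ on the right (needed so the relevant $\Hom_A(Q,-)$ sequences remain exact). This closes the induction and gives $Q\lcodom_A M \geq n = Q\lcodom_A T$ for all $M \in \widehat{\add T}$, hence $Q\lcodom_A \widehat{\add T} \geq Q\lcodom_A T$, completing the proof.

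The main obstacle I anticipate is verifying cleanly that all syzygies appearing in the $\add T$-resolution of $M$, as well as $M$ itself, lie in $Q^\perp$ (equivalently $\Ext^{i>0}_A(Q, -) = 0$ on them), which is exactly what is needed for the relative-codominant-dimension splicing lemmas to apply, and dually that the $\add Q$-coresolutions combine correctly. This should follow from $T^\perp = \widehat{\add T}$ together with $Q \in \add T$: since $T^\perp$ is a resolving subcategory containing $Q$, it is closed under kernels of epimorphisms, so every syzygy stays in $T^\perp \subseteq Q^\perp$; one also needs $M \in T^\perp$, which is the hypothesis $M \in \widehat{\add T} = T^\perp$. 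I would state this as a preliminary remark before the induction so the splicing step in the inductive argument is unobstructed. A secondary point to handle carefully is the case $Q\lcodom_A T = \infty$, where the induction should be read as "$\geq m$ for every $m$", but the argument is identical level by level.
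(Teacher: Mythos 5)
Your proof takes essentially the same route as the paper: the inequality $Q\lcodom_A \widehat{\add T}\leq Q\lcodom_A T$ is immediate from $T\in\widehat{\add T}$, and for the converse one fixes an $\add T$-resolution of $M\in\widehat{\add T}$ and applies the dual of \citep[Lemma 3.1.7]{Cr2} inductively along its short exact sequences, using $Q\lcodom_A T_i\geq n$ for each $T_i\in\add T$. One small remark: the paper invokes that splicing lemma with no appeal to $Q^\perp$-membership of the syzygies, whereas your argument routes that check through $Q\in\add T$ (hence $T^\perp\subseteq Q^\perp$), which is not among the stated hypotheses of the lemma; the extra precaution is harmless in the contexts where the lemma is later applied, but the paper's proof suggests it is not needed for the splicing step itself.
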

\begin{proof}
	Since $T\in \widehat{\add T}$, $Q\lcodom_A T\geq Q\lcodom_A \widehat{\add T}$. If $Q\lcodom_A \widehat{\add T}$ is infinite, then there is nothing more to show. Assume that $Q\lcodom_A T=n$. Let $M\in \widehat{\add T}$. Then there exists an exact sequence $0\rightarrow T_r\rightarrow \cdots\rightarrow T_0\rightarrow X\rightarrow 0$, where $T_i\in \add T$. In particular, $Q\lcodom_A T_i\geq n$. Applying Lemma 3.1.7 on the short exact sequences of this resolution we obtain, by induction, that $Q\lcodom_A X\geq n$.
\end{proof}

We need the following result which is a generalisation of the classical Brenner-Butler theorem.

\begin{Lemma}
	Assume that $T$ is a tilting-cotilting module with $T^\perp=\widehat{\add T}$. Denote by $E$ the endomorphism algebra $\End_A(T)^{op}$. Then, the following assertions hold.
	\begin{enumerate}
		\item The restriction of the functor $\Hom_A(T, -)\colon A\m\rightarrow E\m$ to $\widehat{\add T}$ is fully faithful;
		\item The functor $\Hom_A(T, -)\colon A\m\rightarrow E\m$  restricts to an exact equivalence between $T^\perp$ and $\{X\in E\m \ | \ \Tor_{i>0}^E(T, X)=0\}$.
	\end{enumerate}
\end{Lemma}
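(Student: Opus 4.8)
The plan is to establish the two statements through the standard tilting-theoretic machinery adapted to the subcategory $T^\perp = \widehat{\add T}$. First I would set up notation: write $E = \End_A(T)^{op}$ and let $F = \Hom_A(T, -) \colon A\m \to E\m$ with its left adjoint (up to the usual identifications) $G = T \otimes_E - \colon E\m \to A\m$. Since $T$ is tilting-cotilting, it has finite projective and injective dimension over $A$, and $T$ is a projective generator in the derived sense that underlies the classical Brenner--Butler theorem. The key extra input is the hypothesis $T^\perp = \widehat{\add T}$, which from Theorem~\ref{thm4dot1} is equivalent to $A$ having finite global dimension and forces $T^\perp$ to be a resolving-type subcategory closed under the relevant operations.

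For part (1), the plan is to show the unit and counit of the adjunction restrict to isomorphisms on $\widehat{\add T}$. On $\add T$ this is immediate: $F$ sends $T$ to the regular module $E$ and is fully faithful there by definition of $E$. Then I would extend to $\widehat{\add T}$ by dévissage: for $M \in \widehat{\add T}$ pick a resolution $0 \to T_r \to \cdots \to T_0 \to M \to 0$ with $T_i \in \add T$, apply $F$ (which is exact on this resolution since the syzygies lie in $T^\perp$ and $\Ext^{>0}_A(T, T^\perp) = 0$), and use the five lemma together with naturality of the unit to conclude that $\mathrm{id} \to GF$ is an isomorphism on $M$; full faithfulness of $F$ on $\widehat{\add T}$ then follows formally from the unit being invertible there. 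For part (2), I would first check that $F$ sends $T^\perp$ into $\{X \in E\m \mid \Tor^E_{i>0}(T, X) = 0\}$ — here one uses that a module in $T^\perp$ has a finite $\add T$-resolution (this is exactly $T^\perp = \widehat{\add T}$), so $F$ of it has finite projective dimension over $E$ and the higher $\Tor$ of $T$ against it vanishes by the same resolution argument. Conversely, given $X$ with $\Tor^E_{i>0}(T,X) = 0$, one shows $GX \in T^\perp$ and that the counit $FGX \to X$ and unit $GX \to GFGX$ give mutually inverse equivalences; exactness of the restricted functor follows because $\Tor$-vanishing makes $G$ exact on that subcategory.

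The main obstacle I anticipate is verifying the precise homological bookkeeping in part (2): ensuring that the subcategory $\{X \in E\m \mid \Tor^E_{i>0}(T,X) = 0\}$ is genuinely the essential image of $T^\perp$ under $F$, and that no module is lost or gained. The classical Brenner--Butler statement handles the case where $T$ is a cotilting module with $T^\perp$ all of (something), but here $T^\perp$ is a proper subcategory, so I would need to be careful that the equivalence is exact as a functor $T^\perp \to \{X \mid \Tor^E_{>0}(T,X)=0\}$ rather than merely additive — this uses that short exact sequences in $T^\perp$ remain exact under $F$, which again rests on $\Ext^{>0}_A(T, T^\perp) = 0$ and the resolving properties guaranteed by $\gldim A < \infty$ via Theorem~\ref{thm4dot1}. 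A secondary technical point is identifying $G$ correctly: since $T$ may not be projective over $A$, one must work with $T \otimes_E -$ and its derived functors rather than a naive inverse, and confirm the adjunction isomorphisms $\Hom_E(FM, X) \cong \Hom_A(M, GX)$ hold on the nose for $M \in \widehat{\add T}$. Once these are in place, both assertions follow by assembling the pieces, and I would also remark that Lemma~\ref{lemma5dot1} is what lets one transport the relative codominant dimension statistic across this equivalence in the subsequent Hemmer--Nakano argument.
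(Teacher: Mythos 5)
Your proposal is correct and follows essentially the same route as the paper: exactness of $F=\Hom_A(T,-)$ on $\widehat{\add T}\subset T^\perp$, full faithfulness via a two-term $\add T$-presentation and a five-lemma argument, then the $\Tor$-vanishing computed off the finite $\add T$-resolution, and for the converse the crucial use of $\gldim A<\infty$ (hence $\gldim E<\infty$ by derived equivalence) to produce a finite $E$-projective resolution of $Y$ whose image under $T\otimes_E-$ lands in $\widehat{\add T}$. One small caution on precision: in the tensor--hom adjunction $T\otimes_E-\dashv\Hom_A(T,-)$ the counit lives on the $A$-side as $T\otimes_E\Hom_A(T,M)\to M$ and the unit on the $E$-side as $Y\to\Hom_A(T,T\otimes_EY)$, so what you want to show invertible on $\widehat{\add T}$ is the counit $GF\to\mathrm{id}$ (not $\mathrm{id}\to GF$), and on the $\Tor$-side the unit $\mathrm{id}\to FG$; the directions in your write-up are swapped, but the underlying dévissage argument is exactly the paper's.
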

\begin{proof}
	Since $\widehat{\add T}\subset T^\perp$, $\Hom_A(T, -)$ is exact on $\widehat{\add T}$. Observe that for any $X\in A\m$, \begin{align}
		\Hom_A(T, X)\simeq \Hom_E(E, \Hom_A(T, X))\simeq \Hom_E(\Hom_A(T, T), \Hom_A(T, X)).
	\end{align}Pick $X\in \widehat{\add_A T}.$ Then, there exists an exact sequence $T_1\rightarrow T_0\rightarrow X\rightarrow 0$, with $T_0, T_1\in \add_A T$. Applying $\Hom_A(-, Y)$ and $\Hom_E(\Hom_A(T, -), \Hom_A(T, X))$ to this exact sequence we obtain by diagram chasing that $\Hom_A(T, -)$ is fully faithful on $\widehat{\add T}$ (see also \citep[Lemma 2.2.4]{Cr1}). In particular, $\Hom_E(\Hom_A(T, Q), E)\simeq \Hom_A(Q, T)$. Now, for any $X\in \widehat{\add_A T}$, the image of $0\rightarrow T_r\rightarrow \cdots\rightarrow T_0\rightarrow X\rightarrow 0$ under $\Hom_A(T, -)$ is a projective $E$-resolution of $\Hom_A(T, X)$. Since $\Hom_A(T, -)$ is fully faithful on $\widehat{\add_A T}$, the image of the previous projective $E$-resolution under $T\otimes_E -$ is equivalent to the exact sequence $0\rightarrow T_r\rightarrow \cdots\rightarrow T_0\rightarrow X\rightarrow 0$. Therefore, $\Tor_{i>0}^E(T, \Hom_A(T, X))=0$.

Let $Y\in E\m$ such that $\Tor_{i>0}^E(T, Y)=0$. By Theorem \ref{thm4dot1}, $A$ has finite global dimension, and since $E$ is derived equivalent to $A$, $E$ also has finite global dimension. So, applying $T\otimes_E -$ to a finite projective resolution of $Y$, and since $T\otimes_E -$ sends projectives to modules in $\add_A T$ we obtain that by $\Tor_{i>0}^E(T, Y)=0$ that $T\otimes_E Y\in \widehat{\add_A T}=T^\perp$. Since $\Hom_A(T, T\otimes_E P)\simeq P$ as $E$-modules, by applying $\Hom_A(T, T\otimes_E -)$ to a finite projective resolution of $Y$ together with diagram chasing we can deduce that $\Hom_A(T, T\otimes_E Y)\simeq Y$ as $E$-modules. 
\end{proof}

We now recall the notion of a cover with respect to a resolving subcategory,
introduced by  the first-named author \cite[Section~3]{Cr1}.

\begin{Def} 
Let $A$ be a finite-dimensional algebra and let $\mathcal{A}$ be a resolving subcategory of $A\m$. Let $P$ be a finitely generated projective $A$-module and write $B=\End_A(P)^{\op}$ for the endomorphism algebra. Let $i\geq 0$ be a positive integer. The pair $(A, P)$ is called an  {\bf $i\hy\mathcal{A}$ cover} of $B$ if the Schur functor $F=\Hom_A(P,-)\colon A\m\to B\m$ induces isomorphisms
\[
\Ext_A^j(M, N) \to \Ext_A^j(FM, FN)
\]
for all $M$ and $N$ in $\mathcal{A}$ and for all $0\leq j\leq i$.
\end{Def}

For the next result, it is useful to recall the following homological characterisation of relative codominant dimension.

\begin{Theorem}
	Let $Q\in A\m$ and denote by $B$ the endomorphism algebra $\End_A(Q)^{\op}$. For any $M\in A\m$ and natural number $n$, $Q\lcodom_A M\geq n\geq 2$ if and only if $Q\otimes_B \Hom_A(Q, M)\rightarrow M$, $q\otimes f\mapsto f(q)$, is an isomorphism of left $A$-modules and $\Tor_i^B(Q, \Hom_A(Q, M))=0$ for $1\leq i\leq n-2$.
\end{Theorem}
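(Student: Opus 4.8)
The plan is to establish this as a Tor-theoretic translation of the defining exact sequence, exploiting that $B = \End_A(Q)^{\op}$ acts on $Q$ on the right and that $\Hom_A(Q,-)$ sends $\add_A Q$ to the projective $B$-modules, with $Q \otimes_B -$ as a quasi-inverse on that subcategory. First I would fix notation: write $F = \Hom_A(Q,-)\colon A\m \to B\m$ and $G = Q\otimes_B -\colon B\m \to A\m$, and record the standard adjunction unit/counit together with the fact that for $Q' \in \add_A Q$ the counit $G F Q' \to Q'$ is an isomorphism and $FQ'$ is a projective $B$-module, while $GP \in \add_A Q$ for any projective $B$-module $P$ and $F G P \simeq P$.

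For the forward direction, suppose $Q\lcodom_A M \geq n \geq 2$, so there is an exact sequence $Q_n \to \cdots \to Q_1 \to M \to 0$ with $Q_i \in \add_A Q$ that stays exact after applying $F$. Applying $F$ yields an exact sequence of $B$-modules $FQ_n \to \cdots \to FQ_1 \to FM \to 0$ with the $FQ_i$ projective; truncating, this is the start of a projective resolution of $FM$. Now apply $G = Q\otimes_B -$: since each $GFQ_i \simeq Q_i$ canonically and the original sequence $Q_n \to \cdots \to Q_1 \to M \to 0$ is exact, comparing the complex $GFQ_n \to \cdots \to GFQ_1$ (whose homology in degrees $1,\dots,n-2$ computes $\Tor_i^B(Q, FM)$ after splicing in the kernel) with the exact sequence gives $\Tor_i^B(Q, FM) = 0$ for $1 \leq i \leq n-2$ and $GFM \simeq M$ via the counit, i.e.\ $Q\otimes_B \Hom_A(Q,M) \to M$ is an isomorphism. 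The one subtle point here is bookkeeping the degree shift: exactness of the $F$-image sequence only controls $\Tor$ up to the length of the resolution we have, which is exactly why the bound is $n-2$ rather than $n$; I would make this precise by splicing $0 \to K \to Q_{n-1} \to \cdots \to Q_1 \to M \to 0$ and $0\to K'\to Q_n\to K\to 0$, applying $F$ then $G$, and using the snake/long-exact-sequence argument degree by degree.

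For the converse, assume $Q\otimes_B FM \to M$ is an isomorphism and $\Tor_i^B(Q, FM) = 0$ for $1 \leq i \leq n-2$. Take a projective $B$-resolution $P_{n-1} \to \cdots \to P_1 \to P_0 \to FM \to 0$ (truncated at stage $n-1$), which by the vanishing of the low Tor's stays exact through degree $n-2$ after applying $G = Q\otimes_B -$; since $GP_i \in \add_A Q$ and $GFM \simeq M$, this produces an exact sequence $GP_{n-1} \to \cdots \to GP_0 \to M \to 0$ with terms in $\add_A Q$ — at least the last $n$ terms form an exact sequence of the required shape. It remains to check that this sequence stays exact under $F$, which is where fully-faithfulness of $F$ on $\add_A Q$ (equivalently, $FG P \simeq P$) is used: applying $F$ to $GP_\bullet$ recovers the complex $P_\bullet$ up to isomorphism in the relevant range, and $P_\bullet$ is exact by construction. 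I expect the main obstacle to be precisely this careful matching of exactness ranges and degree shifts — making sure the "$n \geq 2$" and "$n-2$" thresholds line up on both directions — together with verifying that the isomorphism hypothesis $Q\otimes_B \Hom_A(Q,M)\xrightarrow{\sim} M$ is exactly the counit map so that the splicing is canonical; once the adjunction machinery and the $\add_A Q \leftrightarrow \operatorname{proj} B$ correspondence are set up cleanly, the rest is diagram chasing of the type already invoked in the preceding Brenner--Butler-type lemma.
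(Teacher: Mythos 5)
The paper does not give an internal proof of this statement; it is cited as \citep[Theorem~3.1.4]{Cr2}. Your argument is correct and is the standard way to prove such a Tor-theoretic characterisation, so there is nothing to reconcile with the paper's (non-existent) in-text proof. One small remark: in the converse direction, what the Tor-vanishing gives is exactness of $GP_{n-1}\to\cdots\to GP_0\to M\to 0$ at the positions $GP_{n-2},\ldots,GP_0,M$, and that is precisely what the definition of $Q\lcodom_AM\geq n$ asks for (exactness is not required at the leftmost term); once you observe that $F$ applied to this sequence recovers $P_{n-1}\to\cdots\to P_0\to FM\to 0$ via the unit isomorphisms $\eta_{P_i}$ together with the triangle identity $F\epsilon_M\circ\eta_{FM}=\mathrm{id}$, the required exactness of the $F$-image is immediate and the bookkeeping of $n$ versus $n-2$ lines up exactly as you anticipated.
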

\begin{proof}
	See \citep[Theorem 3.1.4.]{Cr2}.
\end{proof}

Observe that Theorem over finite-dimensional algebras could be reformulated using extension groups. Indeed, according to the notation of Theorem, over finite-dimensional algebras there exists the following identity $D\Tor_{i>0}^B(Q, \Hom_A(Q, M))\simeq \Ext_B^{i>0}(\Hom_A(Q, M), \Hom_A(Q, DA))$ (see for example \citep[equation (200)]{Cr1}).

\begin{Theorem}\label{thm7dot4}
	Let $Q\in A\m$ satisfying $Q\in Q^{\perp}$, $\pdim_A Q\leq d$ and $\injdim_A Q\leq d$.	Assume that $A$ satisfies the following
	\begin{align}
		\gldim A\leq 2d\leq Q\ldom_A A, \label{eq24}
	\end{align} for some $d>1$. 
Let $T$ be the basic $d$-tilting-cotilting module constructed in Proposition~\ref{thmcotiltilbydimd}. Denote by $E$ the endomorphism algebra $\End_A(T)^{\op}$. Then, the following assertions hold.
\begin{enumerate}[(i)]
	\item The restriction of the functor $F_Q:=\Hom_A(Q, -)\colon A\m\rightarrow \End_A(Q)^{\op}\m$ to $\widehat{\add_A T}$ is fully faithful;
	\item The functor $F_Q$ induces canonical isomorphisms for any $M, N\in T^\perp\colon$
	\begin{equation}
		\Ext_A^i(M, N)\simeq \Ext_{\End_A(Q)^{\op}}^i(F_QM, F_QN), \quad 0\leq i\leq d-2;
	\end{equation}
\item The pair $(E, \Hom_A(T, Q))$ is a $(d-2) \hy\mathcal{A}$ cover of $\End_A(Q)^{\op}$ where 
\[
\mathcal{A}=\{Z\in E\m\colon \Tor_{i>0}^E(T, Z)=0\}.
\]
\end{enumerate}
\end{Theorem}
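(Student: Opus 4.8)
The plan is to assemble the three assertions from the two preceding lemmas of this section together with Theorem~\ref{thmcotiltilbydimd} and the relative codominant dimension characterisation recalled above. First I would invoke Proposition~\ref{thmcotiltilbydimd} to produce the basic $d$-tilting-cotilting module $T$ with $Q\ldom_A T\geq d$ and $Q\lcodom_A T\geq d$; since $d>1$ these values are at least $2$, which will let us apply the $\Tor$-characterisation of relative codominant dimension. Crucially, by Theorem~\ref{thm4dot1}, $A$ has finite global dimension (which forces $T^\perp=\widehat{\add_A T}$), so all the homological hypotheses of the two lemmas about $T$ are in force. Assertion (i) is then immediate from part~(1) of the Brenner--Butler type lemma applied with the tilting module $Q$ in place of $T$: here one needs $Q^\perp=\widehat{\add_A Q}$, which does \emph{not} hold in general, so instead I would argue directly — since $\widehat{\add_A T}\subseteq T^\perp$ and $T^\perp\subseteq Q^\perp$ (because $Q\in\add_A T$ and $\pdim_A Q\le d\le Q\lcodom_A T$ gives $\Ext^{>0}_A(Q,-)$ vanishing on $T^\perp$ via Corollary~\ref{Cor1dot3}(iii)), the functor $F_Q=\Hom_A(Q,-)$ is exact on $\widehat{\add_A T}$, and then the standard diagram chase using a presentation $T_1\to T_0\to M\to 0$ over $\add_A T\subseteq \widehat{\add_A T}$ shows $F_Q$ is fully faithful there, exactly as in the proof of the Brenner--Butler lemma.

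For assertion (ii), the point is that $M,N\in T^\perp=\widehat{\add_A T}$ have finite $\add_A T$-coresolutions; applying $F_Q$ to such coresolutions yields exact sequences in $\End_A(Q)^{\op}\m$ because $F_Q$ is exact on $\widehat{\add_A T}$. Dimension shifting along these coresolutions reduces the comparison of $\Ext^i_A(M,N)$ with $\Ext^i_{\End_A(Q)^{\op}}(F_QM,F_QN)$ to the case where both arguments lie in $\add_A T$, and there one compares with $\Ext$ between the $F_Q$-images. The quantitative input is that $Q\lcodom_A T\geq d$ translates, via the recalled Theorem on the homological characterisation of relative codominant dimension, into $\Tor^B_i(Q,F_Q T)=0$ for $1\le i\le d-2$ with $B=\End_A(Q)^{\op}$, equivalently $\Ext^i_B(F_Q T,F_Q(DA))=0$ in that range by the displayed duality identity; together with full faithfulness of $F_Q$ on $\add_A T$ this forces the $\Ext^i$ comparison isomorphism for $0\le i\le d-2$ on summands of $T$, and hence — by the coresolution argument — on all of $T^\perp$. (One also uses that $F_Q$ is exact on these coresolutions to identify the connecting maps.)

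Finally, assertion (iii) is a direct application of the definition of an $i\hy\mathcal{A}$ cover: by the Brenner--Butler lemma, $\Hom_A(T,-)$ restricts to an exact equivalence between $T^\perp$ and $\mathcal{A}=\{Z\in E\m:\Tor^E_{i>0}(T,Z)=0\}$, which is a resolving subcategory of $E\m$, and under this equivalence $Q\in\add_A T$ corresponds to the projective $E$-module $\Hom_A(T,Q)$ with $\End_E(\Hom_A(T,Q))^{\op}\cong\End_A(Q)^{\op}$; the Schur functor $\Hom_E(\Hom_A(T,Q),-)\colon E\m\to \End_A(Q)^{\op}\m$ restricted to $\mathcal{A}$ is identified, through the equivalence $\Hom_A(T,-)$, with $F_Q$ restricted to $T^\perp$, so the $\Ext$-isomorphisms of part (ii) in degrees $0\le i\le d-2$ are exactly the cover condition. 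The main obstacle I anticipate is handling assertion (i) and the reduction in (ii) cleanly without the missing hypothesis $Q^\perp=\widehat{\add_A Q}$: one must route everything through $T^\perp$ and use $Q\in\add_A T$ plus the vanishing $T^\perp\subseteq Q^\perp$ from Corollary~\ref{Cor1dot3}, rather than quoting the Brenner--Butler lemma verbatim with $Q$ in the role of $T$.
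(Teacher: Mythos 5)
Your overall plan — route (i) and (ii) through the tilting module $T$ and its Brenner--Butler theory, use the $\Tor$-characterisation of relative codominant dimension, and get (iii) by transport along the equivalence $\Hom_A(T,-)\colon T^\perp\to\mathcal{A}$ — is the same as the paper's, and your treatment of (iii) is essentially correct. However, there are two genuine gaps.

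For (i), the diagram chase you describe is circular. Taking a presentation $T_1\to T_0\to M\to 0$ with $T_i\in\add_A T$ and applying $\Hom_A(-,N)$ and $\Hom_B(F_Q(-),F_QN)$ reduces full faithfulness for $M$ to full faithfulness for $T_0$ and $T_1$, which is exactly the case $M\in\add_A T$ of the same statement — unlike the $T$-version, the $E$-modules $\Hom_A(T,T_i)$ are projective, but the $B$-modules $F_QT_i$ are \emph{not} projective unless $T_i\in\add_A Q$. What makes the argument close is that each $M\in\widehat{\add_A T}$ admits an $\add_A Q$-presentation $Q_1\to Q_0\to M\to 0$ that stays exact under $F_Q$, i.e.\ $Q\lcodom_A M\ge 2$; since $F_QQ_i$ \emph{is} projective over $B$, the diagram chase then works. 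This is precisely the content you are missing: by Lemma~\ref{lemma5dot1}, $Q\lcodom_A\widehat{\add_A T}=Q\lcodom_A T\ge d\ge 2$, so every module in $\widehat{\add_A T}$ has such a presentation. The paper cites Lemma~\ref{lemma5dot1} together with a lemma from \cite{Cr2} for exactly this reason, and your proposal never invokes Lemma~\ref{lemma5dot1} at all.

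For (ii), the step ``$\Tor_i^B(Q,F_QT)=0$ for $1\le i\le d-2$, equivalently $\Ext^i_B(F_QT,F_Q(DA))=0$, together with full faithfulness on $\add_A T$ forces the $\Ext^i$ comparison on summands of $T$'' does not follow as written: you need $\Ext^i_B(F_QM,F_QN)=0$ for $M,N\in\add_A T$ and $1\le i\le d-2$, and the $\Tor$-vanishing only gives you vanishing against the specific argument $F_Q(DA)$, not against $F_QN$ for arbitrary $N\in\add_A T$. Likewise, your dimension-shift ``along $\add_A T$-coresolutions'' would need $\Ext^{>0}_B(F_QT_j,F_QN)=0$ to replace $\Hom_A(-,N)$-acyclicity on the $B$-side, and that has not been established. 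What the paper does instead is feed $Q\lcodom_A\widehat{\add_A T}\ge d$ (Lemma~\ref{lemma5dot1} again) into the general Hemmer--Nakano style comparison machinery of \cite{Cr2} (Lemma~5.2.1 and Theorem~3.1.1 there), which compares $\Ext$ by using the $\add_A Q$-resolutions — whose $F_Q$-images are genuine projective $B$-resolutions up to the relevant degree — rather than $\add_A T$-(co)resolutions. Your sketch of (ii) cannot be completed without going through those $\add_A Q$-resolutions, and the quantitative bound $d-2$ comes precisely from the length of the projective $B$-resolution one obtains from $Q\lcodom_A M\ge d$.

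Part (iii) is fine: the equivalence $\Hom_A(T,-)\colon T^\perp\to\mathcal{A}$ identifies $\Hom_E(\Hom_A(T,Q),-)$ restricted to $\mathcal{A}$ with $F_Q$ restricted to $T^\perp$, and (ii) then gives the cover condition, exactly as in the paper's chain of isomorphisms.
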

\begin{proof}
The statement (i) follows from Lemma \ref{lemma5dot1} and Lemma 5.1.1 of \citep{Cr2}.
So, in particular, \begin{align}
	\End_B(\Hom_A(Q, T))^{\op}\simeq \End_A(T)^{\op}=E \\
	\End_E(\Hom_A(T, Q))^{\op}\simeq \End_A(Q)^{\op},
\end{align}where the latter follows from projectivization. This shows that $(E, \Hom_A(T, Q))$ is a cover of $\End_A(Q)^{\op}$. The statement (ii) follows from Lemma 5.2.1 of \cite{Cr2}, Lemma \ref{lemma5dot1} and Theorem 3.1.1 of \cite{Cr2}. 

Denote by $B$ the endomorphism algebra $\End_A(Q)^{\op}$. Observe that for any $M\in T^\perp$,
\begin{align}
	\Ext_B^i(\Hom_A(Q, T), \Hom_E(\Hom_A(T, Q), \Hom_A(T, M)))\simeq 
	\Ext_B^i(\Hom_A(Q, T), \Hom_A(Q, M))\\=\Ext_B^i(F_QT, F_QM)\simeq \Ext_A^i(T, M),
\end{align}for all $0\leq i\leq d-2$.

Combining this fact with (i), (iii) follows by \citep[Proposition 3.0.3, Proposition 3.0.4]{Cr1}.
\end{proof}

\section{Examples}
\label{Examples}

We finish the paper with a collection of relative Auslander pairs.

\begin{Example}
	Any pair $(A, M)$ formed by a semi-simple algebra and a faithful $A$-module is a relative $d$-Auslander pair for every natural number $d$.
\end{Example}

\begin{Example}
	Let $A$ be a finite-dimensional algebra with global dimension two and fix $M\in A\m$ satisfying $M\in M^\perp$. Then, it follows from \cite[Lemma 5.1.2, Corollary 3.1.5, Theorem 3.1.1]{Cr2} that the pair $(A, M)$ is a relative $2$-Auslander pair if and only if $M$ has a double centralizer property. Furthermore, every module without self-extensions  over a finite-dimensional algebra of global dimension at most two affording a double centralizer property can be completed to a full tilting module.
\end{Example}

\begin{Example}
	Let $A$ be the following bound quiver algebra
	\begin{equation}
		\begin{tikzcd}
			& 1 \arrow[dr, "\alpha"] \arrow[dl, "\beta", swap]& \\
			2 \arrow[dr, "\gamma", swap] & & 3 \arrow[dl, "\nu"] \\
			& 4 &
		\end{tikzcd}, \quad
		\begin{aligned}
			\nu\alpha=\gamma\beta.
		\end{aligned}
	\end{equation}
	So, $A$ is a finite-dimensional algebra of global dimension exactly two.
	
	For each vertex $i\in \{1, 2, 3, 4\}$, denote by $P(i)$, $I(i)$ and $S(i)$ the projective, injective and simple module associated with the vertex $i$, respectively.
	Pick $Q$ to be the injective $A$-module ${I(2)\oplus I(3)\oplus I(4)}$. It has projective dimension one. Observe that $P(1)\simeq I(4)\in \add_A Q$ and there are exact sequences $0\rightarrow P(2)\rightarrow I(4)\rightarrow I(3)\rightarrow 0$, $0\rightarrow P(3)\rightarrow I(4)\rightarrow I(2)\rightarrow 0$, therefore ${Q\ldom_A P(1)\oplus P(2)\oplus P(3)}$ is infinite since $Q$ is injective.
	Furthermore, the minimal injective resolution of $S(4)$ is $0\rightarrow S(4)\rightarrow I(4)\rightarrow I(2)\oplus I(3)\rightarrow I(1)$. Using \citep[Proposition 3.1.13]{Cr2}, this means that $Q\ldom_A P(4)=2$. This shows that $(A, Q)$ is a relative $2$-Auslander pair.
	The tilting-cotilting module associated with this relative Auslander pair is $T_Q:=Q\oplus I(4)/S(4)$. To see this, note that
	the exact sequence $0\rightarrow S(4)\rightarrow I(4)\rightarrow I(4)/S(4)\rightarrow 0$ yields that $Q\ldom_A Q\oplus I(4)/S(4)\geq 1$ since $Q$ is injective and ${A\in \widecheck{\add_A Q\oplus I(4)/S(4)}}$.
	Again since $Q$ is injective and $I(4)$ is projective, $$\Ext_A^1(T_Q, T_Q)\simeq \Ext_A^1(I(2)\oplus I(3)\oplus I(4)/S(4), I(4)/S(4)).$$
	But observe that all the spaces $\Hom_A(P(1), I(4)/S(4)), \Hom_A(P(2), I(4)/S(4))$, $\Hom_A(P(3), I(4)/S(4))$ have vector space dimension one and $$\Hom_A(I(3), I(4)/S(4))=\Hom_A(I(2), I(4)/S(4))=\Hom_A(S(4), I(4)/S(4))=0.$$ Hence,  $\Ext_A^1(I(2)\oplus I(3)\oplus I(4)/S(4), I(4)/S(4))$ must be zero and the exact sequence $0\rightarrow S(4)\rightarrow I(4)\rightarrow I(4)/S(4)\rightarrow 0$ remains exact under $\Hom_A(Q, -)$. So, $T_Q$ is a 1-tilting-cotilting $A$-module satisfying $Q\ldom_AT_Q, Q\lcodom_A T_Q\geq 1$.
\end{Example}

\begin{Example}Consider the relative Auslander pair of the previous example.
	By the symmetry of relative dominant dimension, the pair $(A^{op}, \Hom_k(Q, k))$ is also a relative 2-Auslander pair, but now $\Hom_k(Q, k)$ is projective with injective dimension one. More explicitly, the pair ${(A, P(1)\oplus P(2)\oplus P(3))}$ is a relative $2$-Auslander pair with the associated tilting-cotilting being $P(1)\oplus P(2)\oplus P(3)\oplus \operatorname{rad} P(1)$.
\end{Example}

\begin{Example}
	We exhibit now an example of a relative Auslander pair $(A, Q)$ with $Q$ not being projective nor injective.     Let $A$ be the following bound quiver algebra
	\begin{equation}
		\begin{tikzcd}
			& 2 \arrow[dr, "\gamma"] & & &\\
			1 \arrow[dr, "\alpha", swap] \arrow[ur, "\beta"] & & 4  & 5 \arrow[l, "\theta", swap] & 6 \arrow[l, "\varepsilon", swap]\\
			& 3 \arrow[ur, "\nu", swap]&  & &
		\end{tikzcd}, \quad
		\nu\alpha=\gamma\beta, \ \theta\varepsilon=0.
	\end{equation}
	As in the previous examples, $A$ has global dimension two and we will use the notation $P(i)$, $I(i)$ and $S(i)$ to write the projective, injective and simple module associated with the vertex $i$, respectively.
	Pick $Q$ to be the $A$-module $I(3)\oplus I(2)\oplus S(5)\oplus I(5)\oplus I(4)$. It has projective dimension and injective dimension one and $Q\ldom S(4)\oplus P(5)= 2$ while the other projective indecomposable modules have infinite relative dominant dimension with respect to $Q$.
\end{Example}

In the following example, we illustrate that there exist relative Auslander pairs $(A, Q)$ with $A$ having arbitrary even global dimension.

\begin{Example}
	Let $k$ be a field and let $d$ be a natural number. The tensor power $(k^2)^{\otimes d}$ is a right module over the group algebra of symmetric group, $kS_d$, via place permutation. The centralizer of this action, $\End_{kS_d}((k^2)^{\otimes d})$, is the Schur algebra $S_k(2, d)$. Using \cite{zbMATH02105773}, Theorem B of \cite{CE}  says that $(S_k(2, d), (k^2)^{\otimes d})$ is a relative $\gldim S_k(2, d)$-Auslander pair. The computation of global dimension of Schur algebras $S_k(2, d)$ can be found in \citep[Theorem 3.7]{P}. In particular, if $k$ has characteristic two and $d$ is even, then $(S_k(2, d), (k^2)^{\otimes d})$ is a relative $d$-Auslander pair.
\end{Example}

\section*{Acknowledgments}

The authors would like to thank the hospitality of the University of Stuttgart where part of this work was developed.

\bibliographystyle{alphaurl}
\bibliography{ref}

\begin{thebibliography}{{Cru}22b}

\bibitem[AR91]{AuslanderReiten}
M.~Auslander and I.~Reiten.
\newblock Applications of contravariantly finite subcategories.
\newblock {\em Adv. Math.}, 86(1):111--152, 1991.
\newblock \href {http://dx.doi.org/10.1016/0001-8708(91)90037-8}
  {\path{doi:10.1016/0001-8708(91)90037-8}}.

\bibitem[ARS95]{zbMATH00707210}
M.~Auslander, I.~Reiten, and S.~O. Smal{\o}.
\newblock {\em Representation theory of {Artin} algebras}, volume~36 of {\em
  Camb. Stud. Adv. Math.}
\newblock Cambridge University Press, 1995.

\bibitem[AT21]{AT}
T.~Adachi and M.~Tsukamoto.
\newblock Tilting modules and dominant dimension with respect to injective
  modules.
\newblock {\em Q. J. Math.}, 72(3):855--884, 2021.
\newblock \href {http://dx.doi.org/10.1093/qmath/haaa050}
  {\path{doi:10.1093/qmath/haaa050}}.

\bibitem[Aus71]{zbMATH03517355}
M.~Auslander.
\newblock Representation dimension of {Artin} algebras. {With} the assistance
  of {Bernice} {Auslander}.
\newblock Queen {Mary} {College} {Mathematics} {Notes}. {London}: {Queen}
  {Mary} {College}. 179 p., 1971.

\bibitem[BS98]{BuanSolberg}
A.~B. Buan and \O. Solberg.
\newblock Relative cotilting theory and almost complete cotilting modules.
\newblock 24:77--92, 1998.
\newblock \href {http://dx.doi.org/10.1007/s10468-005-0341-8}
  {\path{doi:10.1007/s10468-005-0341-8}}.

\bibitem[CBS17]{zbMATH06685118}
W.~Crawley-Boevey and J.~Sauter.
\newblock On quiver {Grassmannians} and orbit closures for
  representation-finite algebras.
\newblock {\em Math. Z.}, 285(1-2):367--395, 2017.
\newblock \href {http://dx.doi.org/10.1007/s00209-016-1712-z}
  {\path{doi:10.1007/s00209-016-1712-z}}.

\bibitem[CE22]{CE}
T.~Cruz and K.~Erdmann.
\newblock Quasi-hereditary covers of {Temperley-Lieb} algebras and relative
  dominant dimension.
\newblock 2022.
\newblock \href {http://arxiv.org/abs/2212.00099} {\path{arXiv:2212.00099}}.

\bibitem[{Cru}22a]{Cr1}
T.~{Cruz}.
\newblock On {Noetherian algebras, Schur functors and Hemmer-Nakano}
  dimensions, 2022.
\newblock \href {http://arxiv.org/abs/2208.00291} {\path{arXiv:2208.00291}}.

\bibitem[{Cru}22b]{Cr2}
T.~{Cruz}.
\newblock On split quasi-hereditary covers and {Ringel} duality, 2022.
\newblock \href {http://arxiv.org/abs/2210.09344} {\path{arXiv:2210.09344}}.

\bibitem[Gre24]{Grevstad}
J.~F. Grevstad.
\newblock Higher {A}uslander-{S}olberg correspondence for exact categories.
\newblock {\em J. Pure Appl. Algebra}, 228(6):Paper No. 107603, 23, 2024.
\newblock URL: \url{https://doi.org/10.1016/j.jpaa.2023.107603}, \href
  {http://dx.doi.org/10.1016/j.jpaa.2023.107603}
  {\path{doi:10.1016/j.jpaa.2023.107603}}.

\bibitem[IS18]{IS}
O.~Iyama and {\O}.~Solberg.
\newblock Auslander-{Gorenstein} algebras and precluster tilting.
\newblock {\em Adv. Math.}, 326:200--240, 2018.
\newblock \href {http://dx.doi.org/10.1016/j.aim.2017.11.025}
  {\path{doi:10.1016/j.aim.2017.11.025}}.

\bibitem[Iya07]{IyamaCorrespondence}
O.~Iyama.
\newblock Auslander correspondence.
\newblock {\em Adv. Math.}, 210(1):51--82, 2007.
\newblock \href {http://dx.doi.org/10.1016/j.aim.2006.06.003}
  {\path{doi:10.1016/j.aim.2006.06.003}}.

\bibitem[LZ21]{LiZhang}
S.~Li and S.~Zhang.
\newblock Algebras with finite relative dominant dimension and almost
  {$n$}-precluster tilting modules.
\newblock {\em J. Pure Appl. Algebra}, 225(2):Paper No. 106498, 19, 2021.
\newblock \href {http://dx.doi.org/10.1016/j.jpaa.2020.106498}
  {\path{doi:10.1016/j.jpaa.2020.106498}}.

\bibitem[Mar18]{zbMATH06902477}
R.~Marczinzik.
\newblock On minimal {Auslander}-{Gorenstein} algebras and standardly
  stratified algebras.
\newblock {\em J. Pure Appl. Algebra}, 222(12):4068--4081, 2018.
\newblock \href {http://dx.doi.org/10.1016/j.jpaa.2018.02.020}
  {\path{doi:10.1016/j.jpaa.2018.02.020}}.

\bibitem[MO04]{zbMATH02105773}
V.~Mazorchuk and S.~Ovsienko.
\newblock Finitistic dimension of properly stratified algebras.
\newblock {\em Adv. Math.}, 186(1):251--265, 2004.
\newblock \href {http://dx.doi.org/10.1016/j.aim.2003.08.001}
  {\path{doi:10.1016/j.aim.2003.08.001}}.

\bibitem[MS20]{zbMATH07198564}
B.~Ma and J.~Sauter.
\newblock On faithfully balanced modules, {F}-cotilting and {F}-{Auslander}
  algebras.
\newblock {\em J. Algebra}, 556:1115--1164, 2020.
\newblock \href {http://dx.doi.org/10.1016/j.jalgebra.2020.03.026}
  {\path{doi:10.1016/j.jalgebra.2020.03.026}}.

\bibitem[NRTZ19]{zbMATH07081686}
V.~C. Nguyen, I.~Reiten, G.~Todorov, and S.~Zhu.
\newblock Dominant dimension and tilting modules.
\newblock {\em Math. Z.}, 292(3-4):947--973, 2019.
\newblock \href {http://dx.doi.org/10.1007/s00209-018-2111-4}
  {\path{doi:10.1007/s00209-018-2111-4}}.

\bibitem[Par01]{P}
A.~E. Parker.
\newblock The global dimension of {S}chur algebras for {${\rm GL}_2$} and
  {${\rm GL}_3$}.
\newblock {\em J. Algebra}, 241(1):340--378, 2001.
\newblock \href {http://dx.doi.org/10.1006/jabr.2001.8759}
  {\path{doi:10.1006/jabr.2001.8759}}.

\bibitem[PS22]{zbMATH07441895}
M.~Pressland and J.~Sauter.
\newblock Special tilting modules for algebras with positive dominant
  dimension.
\newblock {\em Glasg. Math. J.}, 64(1):79--105, 2022.
\newblock \href {http://dx.doi.org/10.1017/S0017089520000609}
  {\path{doi:10.1017/S0017089520000609}}.

\bibitem[Rei07]{R}
I.~Reiten.
\newblock Tilting theory and homologically finite subcategories with
  applications to quasihereditary algebras.
\newblock In {\em Handbook of tilting theory}, volume 332 of {\em London Math.
  Soc. Lecture Note Ser.}, pages 179--214. Cambridge Univ. Press, Cambridge,
  2007.
\newblock \href {http://dx.doi.org/10.1017/CBO9780511735134.008}
  {\path{doi:10.1017/CBO9780511735134.008}}.

\end{thebibliography}

\end{document}